\newtheorem{thm}{Theorem}[section]
\newtheorem{lem}[thm]{Lemma}
\newtheorem{prp}[thm]{Proposition}
\newtheorem{cor}[thm]{Corollary}
\newtheorem{que}[thm]{Question}
\newtheoremstyle{roman} % name
    {8.0pt plus 2.0pt minus 4.0pt}                    % Space above
    {8.0pt plus 2.0pt minus 4.0pt}                    % Space below
    {\normalfont}                % Body font
    {}                           % Indent amount
    {\bfseries}                  % Theorem head font
    {.}                          % Punctuation after theorem head
    {5pt plus 1pt minus 1pt}     % Space after theorem head
    {}  % Theorem head spec (can be left empty, meaning �normal�)
\theoremstyle{roman}
\newtheorem{example}[thm]{Example}
\newtheorem{remark}[thm]{Remark}
\theoremstyle{plain}
\newcommand{\rem}[1]{}
\newcommand{\C}{\mathbb{C}}
\newcommand{\N}{\mathbb{N}}
\newcommand{\Q}{\mathbb{Q}}
\newcommand{\R}{\mathbb{R}}
\newcommand{\Z}{\mathbb{Z}}
\newcommand{\HH}{{\mathrm{H}}}
\newcommand{\fraka}{{\mathfrak{a}}}
\newcommand{\frakm}{{\mathfrak{m}}}
\newcommand{\frakp}{{\mathfrak{p}}}
\newcommand{\calB}{{\mathcal{B}}}
\newcommand{\calG}{{\mathcal{G}}}
\newcommand{\calO}{{\mathcal{O}}}
\newcommand{\bbH}{{\mathbb{H}}}
\newcommand{\bfG}{{\mathbf{G}}}
\newcommand{\vphi}{\varphi}
\newcommand{\derives}{\Longrightarrow}
\newcommand{\suchthat}{\,:\,}
\newcommand{\where}{\,|\,}
\newcommand{\quo}[1]{\overline{#1}}
\newcommand{\SMatII}[4]{\left[\begin{array}{cc} {#1} & {#2} \\ {#3} &
{#4} \end{array}\right]}
\newcommand{\smallSMatII}[4]{\left[\begin{smallmatrix} {#1} & {#2} \\ {#3} &
{#4} \end{smallmatrix}\right]}
 \DeclareMathOperator{\Aut}{Aut}
\DeclareMathOperator{\Cent}{Cent} %
\DeclareMathOperator{\End}{End} %
\DeclareMathOperator{\Gal}{Gal} %
\DeclareMathOperator{\Hom}{Hom} %
\DeclareMathOperator{\id}{id} %
\DeclareMathOperator{\Int}{Int} %
\DeclareMathOperator{\Jac}{Jac} %
\DeclareMathOperator{\Max}{Max} %
\DeclareMathOperator{\Spec}{Spec} %
\newcommand{\nMat}[2]{\mathrm{M}_{#2}(#1)}
\newcommand{\uGL}{{\mathbf{GL}}}
\newcommand{\uPGL}{{\mathbf{PGL}}}
\newcommand{\uU}{{\mathbf{U}}}
\newcommand{\uGm}{{\mathbf{G}}_{\mathbf{m}}}
\newcommand{\nGm}[1]{{\mathbf{G}}_{\mathbf{m},{#1}}}
\newcommand{\uAut}{{\mathbf{Aut}}}
\newcommand{\et}{\mathrm{\acute{e}t}}
\newcommand{\fpqc}{\mathrm{fpqc}}
\newcommand{\units}[1]{{#1^\times}}
\newcommand{\hen}[1]{#1^{\mathrm{h}}} % henselization
\newcommand{\sh}[1]{#1^{\mathrm{sh}}} % strict henselization
\newcommand{\inv}{\mathrm{inv}}
\numberwithin{equation}{section}
\title{Orders that are \'Etale-Locally Isomorphic}
\date{\today}
\author{Eva Bayer-Fluckiger$^\dagger$}
\email{eva.bayer@epfl.ch}
\author{Uriya A.\ First$^*$}
\email{uriya.first@gmail.com}
\author{Mathieu Huruguen$^\dagger$}
\email{mathieu.huruguen@epfl.ch}
\address{$^\dagger$Department of Mathematics, \'Ecole Polytechnique F\'ed\'erale de Lausanne}
\address{$^*$Department of Mathematics, University of Haifa}
\thanks{This research was supported by a Swiss National Science Foundation grant \#200021\_163188.}
\keywords{Hereditary order, maximal order, Dedekind domain, group scheme, reductive group, involution,
central simple algebra}
\subjclass[2010]{
16H10, % Orders in separable algebras
16W10, % Rings with involution
11E57, % classical groups 
11E72% Galois cohomology of linear algebraic groups
}
\begin{document}

\maketitle

%\begin{abstract}
%	Let $R$ be a semilocal Dedekind domain   with fraction field
%	$F$.
%	We show that two hereditary $R$-orders in central simple $F$-algebras
%	which become isomorphic after tensoring with $F$ and with some 
%	faithfully flat \'etale $R$-algebra are isomorphic.
%	On the other hand, this fails for hereditary orders with involution.
%	The latter stands in contrast to a result of the first
%	two authors, who proved    this statement for hermitian forms over hereditary
%	$R$-orders with involution.
%	
%	The  results can be restated by means  of \'etale cohomology
%	and can be seen as variations of the Grothendieck--Serre conjecture on principal
%	homogeneous bundles of reductive group schemes. Connections with {Bruhat--Tits}
%	theory are also discussed.
%\end{abstract}

\section{Introduction}

Let $R$ be a regular local ring with fraction field $F$
and $\calG$ a  group $R$-scheme. We consider the restriction map
$$\HH^1_\et(R,\calG)\to \HH^1_\et(F,\calG)$$
and raise the {\em injectivity question } for the $R$-group scheme $\calG$: is the restriction map injective? That is, if two $\calG$-torsors defined over $R$ become isomorphic over $F$, are they isomorphic over $R$?

\medskip

We will prove that, if $R$ is a semilocal Dedekind domain and $A$ is a hereditary $R$-order in a central simple $F$-algebra, then the restriction map is injective when $\calG$ is   the $R$-group scheme 
$\uAut_R(A)$ of automorphism  of $A$, see Theorem \ref{TH:Aut-A-works}, 
and has trivial kernel when     
%$R$-group scheme 
$\calG=\uPGL_1(A):=\uGL_1(A)/\nGm{R}$, see Theorem \ref{TH:PGL-A-works}. 
The former case  is equivalent to saying that two hereditary orders
in a central simple $F$-algebras that become isomorphic after base change to $F$
and a faithfully
flat \'etale $R$-algebra are already isomorphic. However, this fails for
hereditary orders in simple non-central algebras, see Example~\ref{EX:order-I}.

In contrast, we will show in Section~\ref{sec:counterexamples} that the injectivity question has a negative answer in general for the $R$-group schemes $\uAut_R(A,\sigma)$, where $A$ and $R$ are as above and $\sigma:A\to A$ is an involution fixing $R$.
Equivalently, it can happen that two  hereditary orders with involution which
become isomorphic after base change to $F$ and  a faithfully
flat \'etale $R$-algebra are   non-isomorphic. This cannot happen if $(A,\sigma)$ is Azumaya over $R$, though;
see \cite{Panin_05_purity_for_mult}.

\medskip

To put these results in perspective, recall the famous conjecture of Grothendieck and Serre \cite[Remarque $3$ pp. 26-27]{Groth_58_torsion_homology}, \cite[Remarque $1.11.a$]{Groth_68_Brauer_II}, 
\cite[Remarque p. 31]{Serre_58_fibered_spaces}, which stipulates that the injectivity question has a positive answer if $R$ is a regular local ring and $\calG$ is a reductive group $R$-scheme. This conjecture is still open in full  generality, but many cases have been settled. 
For example, see Nisnevich \cite{Nisne_84_principle_bundles} when $R$ is a discrete valuation ring, Colliot-Th\'el\`ene and Sansuc \cite{Colliot_87_flasque_tori_app} when $G$ is a torus over $R$, and Fedorov and Panin \cite{Fedor_15_Groth_Serre_conj}, 
\cite{Panin_17_GS_conj_III}, when the ring $R$ contains a field $k$.

In \cite{Bayer_17_rational_iso_herm}, the injectivity question 
is answered on the positive for group $R$-schemes of the form
$\uU(A,\sigma)$ with $A$ a hereditary $R$-order.
This led
the first two   authors   to formulate an extension of the Grothendieck-Serre conjecture, see \cite[Question~6.4]{Bayer_17_rational_iso_herm}. They
ask, in the case where $R$ is a semilocal Dedekind domain, whether the injectivity question has a positive answer for a certain family of $R$-group schemes larger than the family of reductive groups, which, 
loosely speaking,   arise from   Bruhat--Tits theory.
It seems very likely that the group schemes considered above fall into the family considered
in \cite[\S6]{Bayer_17_rational_iso_herm}, but more work is required to verify this.
Provided this holds,
the positive results of the present paper answer \cite[Question 6.4]{Bayer_17_rational_iso_herm} on the positive
for certain group schemes, and on the negative for others.
We elaborate  about this %implications of this 
in Section~\ref{sec:remarks}.

\section{Hereditary Orders}

\label{sec:orders}

	Throughout this section,	
	$R$ denotes a Dedekind domain with fraction field
    $F$.
    For  $ \frakp\in \Max R$, let $R_\frakp$ denote the localization
    of $R$ at $\frakp$, and let $\hen{R_\frakp}$ and $\sh{R_\frakp}$ denote the henselization
    and strict henselization of $R_\frakp$, respectively. The corresponding  fraction fields
    are denoted   $\hen{F_\frakp}$ and $\sh{F_\frakp}$.  
    Unadorned tensors are always assumed to be over $R$.  
    
    Recall that an $R$-order in an $F$-algebra 
    $E$ is an $R$-subalgebra $A$ which contains an $F$-basis of
    $E$ and is finitely generated as an $R$-module.
    Equivalently, an $R$-algebra $A$ is an $R$-order in some $F$-algebra 
    $E$ (necessarily isomorphic to $A\otimes F$) if
    and only if $A$ is   $R$-torsion-free and finitely generated as an $R$-module.
    
    As usual, a ring $A$ is called \emph{hereditary}
    if its one-sided ideals are projective $A$-modules.
    Notable examples of hereditary rings include maximal $R$-orders in central simple
    $F$-algebras. Thus, every central simple $F$-algebra contains a hereditary
    order. See \cite[Chapter~9]{Reiner_03_maximal_orders} for an extensive discussion.

\medskip

	We shall prove:
	
	\begin{thm}\label{TH:Aut-A-works}
		Suppose $R$ is a semilocal Dedekind domain,
		let $A$ be a hereditary $R$-order in a central simple
		$F$-algebra and let $A'$ be any $R$-order.
		%such in central simple $F$-algebras.
		If $A$ and $A'$ become isomorphic after tensoring with  $F$ and  some faithfully
		flat \'etale $R$-algebra,
		then $A\cong A'$ as $R$-algebras.
	\end{thm}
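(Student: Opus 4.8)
The plan is to reduce, via Skolem--Noether, to a question about conjugacy of two orders inside a fixed central simple $F$-algebra; to settle that question locally at each maximal ideal of $R$ after passing to strict henselizations, using the classification of hereditary orders; and then to assemble the local solutions into a global one by weak approximation for $\uGL_1$.

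\emph{Reduction.} Fixing an $F$-algebra isomorphism $A\otimes F\cong A'\otimes F$ and replacing $A'$ by its image, I may assume that $A$ and $A'$ are $R$-orders in a single central simple $F$-algebra $E:=A\otimes F=A'\otimes F$ and that $A\otimes S\cong A'\otimes S$ for some faithfully flat \'etale $R$-algebra $S$. An $R$-algebra isomorphism $A'\to A$ extends to an $F$-automorphism of $E$, which is inner by Skolem--Noether; conversely, conjugation by a unit $u\in\units E$ with $uA'u^{-1}=A$ restricts to such an isomorphism. So it suffices to produce $u\in\units E$ with $uA'u^{-1}=A$; and since $A=\bigcap_{\frakp\in\Max R}A_\frakp$ and $A'=\bigcap_{\frakp}A'_\frakp$ inside $E$ while conjugation commutes with localization, it is enough to find one $u\in\units E$ with $uA'_\frakp u^{-1}=A_\frakp$ for all $\frakp\in\Max R$ simultaneously.

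\emph{The local case.} Next I would prove $A\otimes\hen{R_\frakp}\cong A'\otimes\hen{R_\frakp}$ for each $\frakp\in\Max R$. As $\sh{R_\frakp}$ is strictly henselian local, $S\otimes\sh{R_\frakp}$ admits an $\sh{R_\frakp}$-algebra retraction onto $\sh{R_\frakp}$, so base change gives $A\otimes\sh{R_\frakp}\cong A'\otimes\sh{R_\frakp}$; in particular $A'\otimes\sh{R_\frakp}$, hence $A'\otimes\hen{R_\frakp}$, is hereditary (heredity is stable under, and descends along, faithfully flat \'etale base change), and $A'\otimes\hen{F_\frakp}\cong E\otimes\hen{F_\frakp}$ since $A'\otimes F\cong E$. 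Thus $A\otimes\hen{R_\frakp}$ and $A'\otimes\hen{R_\frakp}$ are hereditary orders in one and the same central simple $\hen{F_\frakp}$-algebra $\nMat{D}{n}$ ($D$ a division algebra), so by the classification of hereditary orders over a henselian discrete valuation ring \cite[Ch.~9]{Reiner_03_maximal_orders} each is conjugate, inside $\nMat{D}{n}$, to a standard hereditary order, determined up to cyclic permutation by a composition of $n$. It remains to see the two compositions agree; for this I would base change once more to $\sh{R_\frakp}$ and use that the classifying composition of a hereditary order over $\hen{R_\frakp}$ is recovered from the classifying composition of its base change to $\sh{R_\frakp}$ by a fixed, order-preserving operation that is injective up to cyclic rotation (scaling the entries and repeating the block by amounts depending only on $D$ and on $\kappa(\frakp)\subseteq\kappa(\frakp)^{\mathrm{sep}}$). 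Since $A\otimes\sh{R_\frakp}\cong A'\otimes\sh{R_\frakp}$, the two compositions over $\sh{R_\frakp}$ coincide, hence so do the two over $\hen{R_\frakp}$; unwinding, one obtains $u_\frakp\in\units{(E\otimes\hen{F_\frakp})}$ with $u_\frakp(A'\otimes\hen{R_\frakp})u_\frakp^{-1}=A\otimes\hen{R_\frakp}$.

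\emph{Gluing, and the main obstacle.} For $\frakp\in\Max R$ let $N_\frakp=\{v\in\units{(E\otimes\hen{F_\frakp})}:v(A'\otimes\hen{R_\frakp})v^{-1}=A'\otimes\hen{R_\frakp}\}$, an open subgroup of $\units{(E\otimes\hen{F_\frakp})}$ since it contains $\units{(A'\otimes\hen{R_\frakp})}$; the units of $E\otimes\hen{F_\frakp}$ carrying $A'\otimes\hen{R_\frakp}$ onto $A\otimes\hen{R_\frakp}$ form the coset $u_\frakp N_\frakp$. Because $\Max R$ is finite and $\uGL_1(E)$ is a rational $F$-variety, weak approximation furnishes $u\in\units E$ lying in $u_\frakp N_\frakp$ for every $\frakp\in\Max R$; then $u(A'\otimes\hen{R_\frakp})u^{-1}=A\otimes\hen{R_\frakp}$ for all $\frakp$, and faithfully flat descent along $\hen{R_\frakp}/R_\frakp$ gives $uA'_\frakp u^{-1}=A_\frakp$, whence $uA'u^{-1}=A$ and $A\cong A'$. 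The two outer steps are essentially formal; the heart of the matter is the local case, and within it the precise bookkeeping relating the combinatorial invariant of a hereditary order over $\hen{R_\frakp}$ to the one it acquires over $\sh{R_\frakp}$ --- keeping track of the division algebra $D$ and of possible residue-field inseparability. (Note that $\uAut_R(A)$ is not reductive when $A$ is non-maximal, so Grothendieck--Serre-type results do not apply directly, which is why the structure theory of hereditary orders has to be invoked.)
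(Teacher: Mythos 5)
Your proposal is correct and follows essentially the same route as the paper: reduce to the henselian local case, show there that the cyclic-permutation invariant of a hereditary order over $\hen{R_\frakp}$ is recovered from that of its base change to $\sh{R_\frakp}$ (this is the paper's Proposition~\ref{PR:inv-over-Rsh}), and then glue via Skolem--Noether together with weak approximation. The only difference is cosmetic: the paper packages the gluing as a patching of two isomorphisms (over $F$ and over $\prod_\frakp\hen{R_\frakp}$) using a cited lemma, whereas you approximate a single conjugating unit into the open cosets $u_\frakp N_\frakp$ and intersect localizations --- the underlying density argument is the same.
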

	
	As a consequence, we get:
	
	\begin{cor}\label{CR:Aut-A-works}
		Suppose $R$ is a semilocal Dedekind domain and
		let $A$ be a hereditary $R$-order in a central simple
		$F$-algebra.
		Then the restriction map
		\[
		\HH^1_\et(R,\uAut_R(A))\to \HH^1_\et(K,\uAut_R(A))
		\]
		is injective. Here, $\uAut_R(A)$ denotes the group $R$-scheme
		whose $S$-points are given
		by $\uAut_R(A)(S)=\Aut_S(A\otimes  S)$.
	\end{cor}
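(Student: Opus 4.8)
The plan is to restate the corollary in terms of twisted forms and then invoke Theorem~\ref{TH:Aut-A-works}. Since $A$ is an $R$-order, it is finitely generated and projective as an $R$-module, so $\uAut_R(A)$ is representable by a closed subscheme of $\uGL(A)$, hence by an affine $R$-scheme of finite presentation, and the usual descent formalism applies: because $\Spec R$ is affine, the pointed set $\HH^1_\et(R,\uAut_R(A))$ is in canonical bijection with the set of isomorphism classes of $R$-algebras that become isomorphic to $A$ after base change to some faithfully flat \'etale $R$-algebra (the base point being the class of $A$), and $\HH^1_\et(F,\uAut_R(A))$ is identified in the same way with the forms of $A\otimes F$; under these identifications the restriction map sends the class of an $R$-algebra $A'$ to the class of $A'\otimes F$. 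Thus it suffices to prove: if $A'$ and $A''$ are $R$-algebras, each becoming isomorphic to $A$ over some faithfully flat \'etale $R$-algebra, and $A'\otimes F\cong A''\otimes F$ as $F$-algebras, then $A'\cong A''$ as $R$-algebras.

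The first real step is to show that such an $A'$ is again a hereditary $R$-order in a central simple $F$-algebra, and that $A''$ is at least an $R$-order, so that Theorem~\ref{TH:Aut-A-works} becomes applicable. Fix a faithfully flat \'etale $R$-algebra $S$ with an $S$-algebra isomorphism $A'\otimes S\cong A\otimes S$. Then $A'\otimes S\cong A\otimes S$ is finitely generated projective over $S$ (as $A$ is finitely generated projective over $R$), so by faithfully flat descent $A'$ is finitely generated projective, in particular finitely generated and torsion-free, over $R$; hence $A'$ is an $R$-order in $A'\otimes F$, and the same argument applies to $A''$. Next, $A'\otimes F$ becomes isomorphic to the central simple algebra $A\otimes F$ after base change along the nonzero \'etale $F$-algebra $S\otimes F$: choosing a residue field $L$ of $S\otimes F$ one obtains $(A'\otimes F)\otimes_F L\cong (A\otimes F)\otimes_F L$, which is central simple over $L$, whence $A'\otimes F$ is central simple over $F$. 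Finally, hereditariness of a two-sided noetherian ring amounts to having global dimension at most $1$, and this property passes from $A\otimes S\cong A'\otimes S$ down to $A'$ along the faithfully flat ring map $A'\to A'\otimes S$; together with the fact that $A\otimes S$ is hereditary because $A$ is a hereditary order and $S$ is \'etale over the Dedekind domain $R$, this shows $A'$ is hereditary.

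It remains to combine the two splitting algebras and quote the theorem. Pick faithfully flat \'etale $R$-algebras $S_1,S_2$ with $A'\otimes S_1\cong A\otimes S_1$ and $A''\otimes S_2\cong A\otimes S_2$. Then $S_1\otimes S_2$ is again faithfully flat and \'etale over $R$, and $A'\otimes(S_1\otimes S_2)\cong A\otimes(S_1\otimes S_2)\cong A''\otimes(S_1\otimes S_2)$, so $A'$ and $A''$ become isomorphic over the faithfully flat \'etale $R$-algebra $S_1\otimes S_2$; by hypothesis they also become isomorphic over $F$. Since $A'$ is a hereditary $R$-order in a central simple $F$-algebra and $A''$ is an $R$-order, Theorem~\ref{TH:Aut-A-works}, applied with $A'$ in the role of $A$ and $A''$ in the role of $A'$, yields $A'\cong A''$, which proves the corollary.

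I expect the only nontrivial input to be the descent statements in the second paragraph — in particular that the property ``hereditary $R$-order in a central simple $F$-algebra'' is stable under passing to \'etale-local twists. The stability of ``$R$-order'' and ``central simple generic fibre'' is standard faithfully flat descent, but the hereditariness of $A\otimes S$ (equivalently, that hereditariness of $R$-orders is preserved under \'etale, or strictly henselian, base change) relies on the local structure theory of hereditary orders, cf.\ \cite[Chapter~9]{Reiner_03_maximal_orders}. The reduction to Theorem~\ref{TH:Aut-A-works} and the bookkeeping with the splitting algebras are then routine.
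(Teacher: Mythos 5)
Your proof is correct and follows essentially the same route as the paper: identify $\HH^1_\et(R,\uAut_R(A))$ with isomorphism classes of \'etale-local forms of $A$, show that every such form is again a hereditary $R$-order in a central simple $F$-algebra, and reduce to Theorem~\ref{TH:Aut-A-works}. The only inessential difference is that you transfer hereditariness directly along the faithfully flat \'etale algebra $S$ (ascent to $A\otimes S$, then faithfully flat descent of global dimension), whereas the paper routes through the strict henselizations $\sh{R_\frakp}$ and Corollary~\ref{CR:hereditary-descent}; both rest on the same underlying descent facts recorded in Lemma~\ref{LM:hereditary-descent}.
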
	
	
	\begin{proof}
		The cohomology set $\HH^1_{\et}(R,\uAut_R(A))$ classifies isomorphism classes
		of $R$-orders which become isomorphic to $A$ after tensoring
		with a faithfully flat \'etale $R$-algebra.
		By Theorem~\ref{TH:Aut-A-works}, it is enough to show
		that any such $R$-order $A'$ is also hereditary.
		
		Let $S$ be a faithfully
		flat \'etale $R$-algebra such that $A\otimes S\cong A'\otimes S$
		as $S$-algebras.
		Since $S$ is faithfully flat and \'etale over $R$,
		for all $\frakp\in\Max R$,
		there exists an $R$-algebra homomorphism 
		$S\to \sh{R_\frakp}$.
		Thus, $A\otimes \sh{R_\frakp}\cong A'\otimes \sh{R_\frakp}$
		for all $\frakp\in\Max R$.
		We shall   see below, in Corollary~\ref{CR:hereditary-descent},
		that this implies that $A'$ is hereditary.
	\end{proof}

	The proof of Theorem~\ref{TH:Aut-A-works} will be done
	in two steps. First, we will prove the theorem when $R$ is a henselian discrete valuation ring
	(abbrev.: DVR). This
	step will rely heavily on the structure theory of hereditary orders.
	Then, the general case will be deduced by means of patching.
	
	We will also show that the theorem fails for 
	hereditary $R$-orders in the larger class of finite-dimensional (not necessarily central) simple $F$-algebras.

\subsection{Preliminary Results}

	\begin{lem}\label{LM:hereditary-descent}
    	Assume that $R$ is a DVR, and let $R'$ be a DVR which is also a
    	faithfully flat $R$-algebra. Denote the maximal ideals of $R$, $R'$ by $\frakm$, $\frakm'$
    	respectively, and suppose that $k':=R'/\frakm'$ is a separable algebraic field extension of $k:=R/\frakm$.
    	Let $A$ be an $R$-order. Then
    	\begin{enumerate}
    		\item[(i)] $\Jac(A)\otimes R'=\Jac(A\otimes R')$ and
    		\item[(ii)] $A$ is hereditary if and only if $A\otimes R'$ is hereditary.
    	\end{enumerate}
    \end{lem}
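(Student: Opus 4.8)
The plan is to push both statements down to the residue field $k'$, where the separability of $k'/k$ does the real work: first prove (i) by reduction modulo the maximal ideals, and then deduce (ii) from (i) together with the classical characterisation of hereditary orders by projectivity of their radical and with faithfully flat descent of projectivity.

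\emph{On (i).} I would use that, for a module-finite algebra $C$ over a Noetherian local ring $(S,\frakn)$, one has $\frakn C\subseteq\Jac(C)$ (Nakayama applied to the simple left $C$-modules, which are finitely generated over $S$), $C/\Jac(C)$ is semisimple artinian, and $\Jac(C)/\frakn C=\Jac(C/\frakn C)$. Apply this to $A$ over $(R,\frakm)$ and to $B:=A\otimes R'$ over $(R',\frakm')$. Faithful flatness gives $k\otimes R'\neq 0$, hence $\frakm R'\subseteq\frakm'$ and a canonical identification $B/\frakm'B\cong\overline{A}\otimes_k k'$, where $\overline{A}:=A/\frakm A$; since in the situation of interest $R'$ is unramified over $R$ we moreover have $\frakm R'=\frakm'$. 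As $R'$ is flat, $\Jac(A)\otimes R'$ is a two-sided ideal of $B$, and the inclusion $\Jac(A)\otimes R'\subseteq\Jac(B)$ is automatic: a power $(\Jac(A)\otimes R')^m=(\Jac(A))^m\otimes R'$ lies in $\frakm A\otimes R'=\frakm'B\subseteq\Jac(B)$ (because $\Jac(\overline{A})$ is nilpotent), so the image of $\Jac(A)\otimes R'$ in the semisimple ring $B/\Jac(B)$ is nil, hence zero. For the reverse inclusion, both $\Jac(A)\otimes R'$ and $\Jac(B)$ contain $\frakm'B$, so it is enough to compare their images in $B/\frakm'B=\overline{A}\otimes_k k'$, namely $\Jac(\overline{A})\otimes_k k'$ and $\Jac(\overline{A}\otimes_k k')$; thus (i) reduces to
\[
\Jac(\overline{A}\otimes_k k')=\Jac(\overline{A})\otimes_k k'.
\]
Here $\supseteq$ is clear ($\Jac(\overline{A})\otimes_k k'$ is a nilpotent ideal of the finite-dimensional algebra $\overline{A}\otimes_k k'$), and for $\subseteq$ it suffices that $(\overline{A}/\Jac(\overline{A}))\otimes_k k'$ be semisimple. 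Decomposing the semisimple algebra $\overline{A}/\Jac(\overline{A})$ into simple factors $M_n(D)$ with $D$ a division algebra finite over $k$, one has $M_n(D)\otimes_k k'=M_n\bigl(D\otimes_{Z(D)}(Z(D)\otimes_k k')\bigr)$, and $Z(D)\otimes_k k'$ is a finite product of finite field extensions of $k'$ precisely because $k'/k$ separable forces $Z(D)\otimes_k k'$ to be reduced; each $D\otimes_{Z(D)}L$, for $L$ such a field, is central simple over $L$, hence simple, so the product is semisimple. This separable scalar-extension step is the heart of the matter.

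\emph{On (ii).} I would invoke the classical fact (see \cite[Ch.~9]{Reiner_03_maximal_orders}) that an $R$-order over a DVR is left hereditary if and only if its Jacobson radical is projective as a left module, and likewise on the right; in particular left, right and two-sided heredity agree for such orders. If $A$ is hereditary, then $\Jac(A)$ is a finitely generated projective left $A$-module, so $\Jac(A)\otimes R'$ is a finitely generated projective left $B$-module, and by (i) this module equals $\Jac(B)$; hence $B=A\otimes R'$ is hereditary. Conversely, if $B$ is hereditary, then $\Jac(B)=\Jac(A)\otimes R'$ is finitely generated projective over $A\otimes R'$, and since $R\to R'$ is faithfully flat and $A$ is Noetherian, faithfully flat descent of projectivity for finitely generated modules shows $\Jac(A)$ is projective over $A$ (and over $A^{\op}$), so $A$ is hereditary.

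\emph{Main obstacle.} The only substantive ingredient is the separable base-change behaviour of the Jacobson radical used in (i) — equivalently, the stability of semisimplicity of finite-dimensional algebras under separable field extensions; the rest is Nakayama, flat and faithfully flat base change, and the standard radical-projectivity criterion. The one other point needing care is the bookkeeping that lifts the residue-field identity back over $R'$ in (i), which uses flatness of $R'$ and $\frakm R'=\frakm'$; without the latter, (i) can already fail for $A=R$, so one really wants $R'$ unramified over $R$, as it is in the intended applications.
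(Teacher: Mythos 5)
Your proof follows essentially the same route as the paper's: for (i) you reduce modulo $\frakm'$ and use that semisimplicity of $A/\Jac(A)$ is preserved under the separable extension $k'/k$ for one inclusion, and nilpotence modulo $\frakm'(A\otimes R')$ plus semiprimeness for the other, while for (ii) you combine the radical-projectivity criterion for hereditariness with flat base change and faithfully flat descent of projectivity, exactly as the paper does (citing Auslander--Goldman and Lam for these facts). Your closing observation that the identification $(A\otimes R')/\frakm'(A\otimes R')\cong (A/\frakm A)\otimes_k k'$ --- and hence statement (i) itself, already for $A=R$ --- requires $\frakm R'=\frakm'$ is correct and worth noting: the paper's proof uses this identification implicitly, and the extra hypothesis does hold in all the intended applications, where $R'$ is a henselization or strict henselization.
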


    \begin{proof}
    	Write $A'=A\otimes R'$, $J=\Jac(A)$, $J'=\Jac(A')$ and view $A$ as a subring of $A'$.
    	Since $R'$ is a flat $R$-module, the map $J\otimes_RR'\to A'$ is injective, hence we may identify
    	$J\otimes_RR'$ with $JR'$.
    	
    	(i)  We need to show that $J'=JR'$.
    	By  \cite[Theorem~6.15]{Reiner_03_maximal_orders}, there is $n\in\N$ such that $J^n\subseteq \frakm A\subseteq J$
    	and $J'^n\subseteq \frakm'A'\subseteq J'$.
    	In particular, we may view $A/J$ as a $k$-algebra,
    	and therefore,
    	$A'/JR'\cong (A/J)\otimes_RR'\cong (A/J)\otimes_k k'$.
    	Since $A/J$ a semisimple  finite-dimensional  $k$-algebra and $k'$ is separable over $k$,
    	the ring $(A/J)\otimes_k k'$ is semisimple, and therefore $J'\subseteq JR'$.
    	On the other hand, $(JR')^n=J^nR'\subseteq\frakm AR'=\frakm A'\subseteq \frakm'A'\subseteq J'$, hence
    	$JR'\subseteq J'$, because $J'$ is semiprime. We conclude that $J'=JR'$.

    	(ii) By \cite[p.~5]{Auslan_60_maximal_orders}, it is enough to prove that $J:=\Jac(A)$ is projective as a right $A$-module
    	if and only if  $J':=\Jac(A')$ is projective as a right $A'$-module.
    	The direction ($\derives$) follows from (i) and the other direction
    	follows from \cite[Proposition~4.80(2)]{Lam_99_modules_and_rings} (the proof in \cite{Lam_99_modules_and_rings} is given for $R$-modules but extends
        verbatim to $A$-modules once
        noting that $\Hom_{A\otimes S}(M\otimes S,N\otimes S)\cong \Hom_A(M,N)\otimes S$ whenever $M$ is a finitely presented
        $A$-module and $S$ is a flat $R$-algebra; see \cite[Theorem~2.38]{Reiner_03_maximal_orders} for a proof of the latter).
    \end{proof}

    \begin{cor}\label{CR:hereditary-descent}
    	Let $A$ be an $R$-order. Then $A$ is hereditary if and only if $A\otimes  \hen{R_\frakp}$ is hereditary
    	for all $ \frakp\in\Max R$ if and only if $A \otimes  \sh{R_\frakp}$ is hereditary
    	for all $  \frakp\in\Max R$.
    \end{cor}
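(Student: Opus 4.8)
The plan is to deduce Corollary~\ref{CR:hereditary-descent} from Lemma~\ref{LM:hereditary-descent} together with a purely local characterization of heredity. First I would reduce the statement to the local rings $R_\frakp$: recall that being hereditary is a local property for Noetherian rings, so $A$ is hereditary if and only if $A_\frakp := A\otimes R_\frakp$ is hereditary for every $\frakp\in\Max R$. (One should check that $A_\frakp$ is the same as $A\otimes_R R_\frakp$ and that localization of an $R$-order is an $R_\frakp$-order; this is routine.) Actually, since we want a statement involving \emph{all} maximal ideals simultaneously, the cleanest route is: $A$ hereditary $\iff$ for all $\frakp$, $A_\frakp$ hereditary. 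This is standard, e.g.\ because the one-sided ideals of $A$ are projective iff they are locally projective, using that projectivity of a finitely presented module is local.

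Having localized, the next step is to apply Lemma~\ref{LM:hereditary-descent} twice, once with $R'=\hen{R_\frakp}$ and once with $R'=\sh{R_\frakp}$, both over the DVR $R_\frakp$. I need to verify the hypotheses of the lemma in each case: $\hen{R_\frakp}$ and $\sh{R_\frakp}$ are DVRs (the henselization and strict henselization of a DVR are again DVRs), each is a faithfully flat $R_\frakp$-algebra (flatness because henselization is a filtered colimit of \'etale, hence flat, algebras, and faithful because the closed point is hit), and the residue field extension is separable algebraic: for the henselization the residue field is unchanged, and for the strict henselization it is a separable closure of $k(\frakp)$, which is separable algebraic by construction. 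With these hypotheses checked, Lemma~\ref{LM:hereditary-descent}(ii) gives that $A_\frakp$ is hereditary iff $A_\frakp\otimes_{R_\frakp}\hen{R_\frakp}=A\otimes\hen{R_\frakp}$ is hereditary, and likewise iff $A\otimes\sh{R_\frakp}$ is hereditary.

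Combining, $A$ is hereditary $\iff$ $A_\frakp$ is hereditary for all $\frakp$ $\iff$ $A\otimes\hen{R_\frakp}$ is hereditary for all $\frakp$ $\iff$ $A\otimes\sh{R_\frakp}$ is hereditary for all $\frakp$, which is exactly the assertion. The only mild obstacle I anticipate is making sure the chain of equivalences is genuinely an ``if and only if'' over all $\frakp$ at once, rather than a statement that requires heredity to be tested at just one prime; this is handled by the locality of heredity noted above, and by the fact that $A\otimes\hen{R_\frakp}$ only sees the $\frakp$-local structure of $A$ (indeed $A\otimes\hen{R_\frakp} = A_\frakp\otimes_{R_\frakp}\hen{R_\frakp}$, since $\hen{R_\frakp}$ is already an $R_\frakp$-algebra). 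No deep input beyond Lemma~\ref{LM:hereditary-descent} and standard commutative algebra is needed, so the proof should be short.
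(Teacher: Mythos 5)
Your proposal is correct and follows the same route as the paper: reduce to the localizations $A\otimes R_\frakp$ via the fact that heredity is a local property (the paper cites Auslander--Goldman for this), then apply Lemma~\ref{LM:hereditary-descent}(ii) to the faithfully flat DVR extensions $R_\frakp\to\hen{R_\frakp}$ and $R_\frakp\to\sh{R_\frakp}$, whose residue field extensions are separable algebraic. Your verification of the lemma's hypotheses is exactly the (implicit) content of the paper's two-line argument.
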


    \begin{proof}
    	By \cite[p.~8]{Auslan_60_maximal_orders},
    	$A$ is hereditary if and only if $A\otimes  R_\frakp$ is hereditary for all $ \frakp\in\Max R$.
    	Now use Lemma~\ref{LM:hereditary-descent}(ii).
    \end{proof}

\subsection{The Henselian Case}
\label{subsec:henselian-case}

    In this subsection, we assume   that
    $R$ is a henselian DVR with maximal ideal $\frakm$ and residue field $k$.
    We shall deduce Theorem~\ref{TH:Aut-A-works} in this special case as a consequence of the structure
    theory of hereditary $R$-orders, which we now recall.

    \begin{thm}[{\cite[\S12]{Reiner_03_maximal_orders}}]
    \label{TH:structure-hered-div}
    Let $D$ be a finite dimensional division $F$-algebra. Then the additive valuation $\nu_F$  of $F$
    extends uniquely to a discrete additive valuation $\nu_D$ on $D$. Furthermore,
    \[
    \calO_D:=\{x\in D \suchthat \nu_D(x)\geq 0\}
    \]
    is the unique maximal $R$-order in $D$.
    \end{thm}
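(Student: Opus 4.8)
The plan is to produce the extension of $\nu_F$ to $D$ by an explicit norm formula, deduce its uniqueness from the fact that $R$ is henselian, and then check directly that the associated valuation ring $\calO_D$ is both an $R$-order and a subring containing every $R$-order of $D$.

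For the extension, I would put $m=[D:F]$ and take $\Nr_{D/F}\colon D\to F$ to be the ordinary norm form, $\Nr_{D/F}(a)=\det(\rho_a)$ where $\rho_a\in\End_F(D)$ is right multiplication by $a$; it is multiplicative and sends $c\in F$ to $c^m$. Define $w(a)=\frac1m\,\nu_F(\Nr_{D/F}(a))$ for $a\neq0$, and $w(0)=\infty$. Multiplicativity of $w$ and $w|_F=\nu_F$ are immediate, so the only substantial point is the ultrametric inequality, which reduces to showing that $w(a)\geq0$ implies $w(1+a)\geq0$. The key observation is that the $F$-subalgebra $F(a)\subseteq D$ is a commutative field, finite over $F$: viewing $D$ as a right $F(a)$-vector space of dimension $r=m/[F(a):F]$, right multiplication by an element of $F(a)$ is scalar multiplication, so $\Nr_{D/F}(a)=\Nr_{F(a)/F}(a)^{[D:F(a)]}$ and hence $w|_{F(a)}=\frac1{[F(a):F]}\,\nu_F\circ\Nr_{F(a)/F}$. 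Since $R$, hence $F$, is henselian, $\nu_F$ extends uniquely to $F(a)$, and that unique extension is precisely this expression; so $w|_{F(a)}$ is a valuation and $w(1+a)\geq\min\{w(1),w(a)\}\geq0$. The value group of $w$ lies in $\frac1m\Z$ and contains $\Z$, hence is infinite cyclic, so $w$ is discrete and, after rescaling, I set $\nu_D=w$. Uniqueness is then easy: any additive valuation $w'$ on $D$ extending $\nu_F$ restricts on each $F(a)$ to a valuation extending $\nu_F$, which by henselianity equals $\nu_D|_{F(a)}$; letting $a$ range over $D$ gives $w'=\nu_D$.

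It remains to identify $\calO_D=\{x\in D:\nu_D(x)\geq0\}$ with the unique maximal $R$-order. That $\calO_D$ is a subring containing $R$ follows from the valuation axioms, and scaling an $F$-basis of $D$ by powers of a uniformizer of $R$ produces an $F$-basis lying in $\calO_D$. For module-finiteness over $R$, I would note that every $x\in\calO_D$ lies in the valuation ring of the finite extension $F(x)/F$, which, $R$ being henselian, is the integral closure of $R$ in $F(x)$ and is finite over $R$; thus every element of $\calO_D$ is integral over $R$, and consequently $\Tr_{D/F}(xy)\in R$ for all $x,y\in\calO_D$. Using the nondegeneracy of the trace pairing on $D$ (when $D$ is separable over $F$; otherwise one invokes excellence of the henselization to keep integral closures in finite field extensions module-finite), $\calO_D$ is contained in the $R$-span of the dual basis of an $F$-basis of $D$ lying in $\calO_D$, so $\calO_D$ is a finitely generated module over the Noetherian ring $R$, i.e.\ an $R$-order. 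Conversely, any $R$-order $A$ in $D$ consists of elements integral over $R$, and for $a\in A$ integrality gives $\nu_D(a)=\nu_{F(a)}(a)\geq0$, so $a\in\calO_D$; thus every $R$-order is contained in $\calO_D$, which is itself an $R$-order, and $\calO_D$ is the unique maximal $R$-order.

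I expect the main obstacle to be the ultrametric inequality for $w$: the whole argument rests on reducing to the commutative subfield $F(a)$ and there using the uniqueness of the valuation extension over the henselian ground field, via the identity $\Nr_{D/F}=\Nr_{F(a)/F}^{[D:F(a)]}$ on $F(a)$. The secondary technical point is module-finiteness of $\calO_D$, handled through the (reduced) trace form in the separable case.
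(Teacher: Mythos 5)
The paper offers no proof of this statement: it is quoted from \cite[\S12]{Reiner_03_maximal_orders}, where the base ring is a \emph{complete} DVR. Your reconstruction follows the same standard route as that source, transposed to the henselian setting: extend $\nu_F$ by the norm formula $w=\frac1m\nu_F\circ\Nr_{D/F}$, obtain the ultrametric inequality by restricting to the commutative subfields $F(a)$ via $\Nr_{D/F}|_{F(a)}=\Nr_{F(a)/F}^{[D:F(a)]}$ and the uniqueness of valuation extensions over a henselian base, and identify $\calO_D$ with the set of elements integral over $R$. All of that part is correct, and the reduction to $F(a)$ is exactly the right key step.

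The module-finiteness of $\calO_D$ is where your argument needs repair, in two places. First, the pairing $(x,y)\mapsto\Tr_{D/F}(xy)$ is \emph{not} nondegenerate merely because $D$ is a separable $F$-algebra: writing $K=\Cent(D)$ and $[D:K]=d^2$, one has $\Tr_{D/F}=d\cdot\bigl(\Tr_{K/F}\circ\Trd_{D/K}\bigr)$, so the ordinary trace form vanishes identically whenever $\Char F$ divides $d$ (e.g.\ a quaternion division algebra in characteristic $2$). You must work with $\Tr_{K/F}\circ\Trd_{D/K}$ itself, which is nondegenerate exactly when $K/F$ is separable and still maps $\calO_D\times\calO_D$ into $R$; your closing parenthetical ``(reduced) trace form'' shows you sensed this, but the argument in the body uses the wrong form. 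Second, the fallback ``excellence of the henselization'' for inseparable $K/F$ is not a theorem: henselization does not change the completion, and there exist DVRs in characteristic $p$ (hence also henselian ones) that are not Japanese, i.e.\ whose integral closure in a suitable purely inseparable finite extension $L/F$ is not module-finite. For such $R$ and $D=L$ there is no maximal $R$-order at all, so in this generality the statement genuinely requires $R$ complete (as in Reiner), or excellent, or $\Cent(D)/F$ separable. This last caveat is inherited from the paper's formulation rather than introduced by you, and it is harmless in the paper's applications (where $D$ is central over $F$ or $R$ is complete), but it cannot be discharged by appealing to excellence of henselizations.
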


    The unique maximal right (and left) ideal of $\calO_D$ is denoted $\frakm_D$
    and we write $k_D=\calO_D/\frakm_D$. The ring $k_D$ is a finite dimensional  
    division $k$-algebra, the center of which may be strictly larger than $k$.

%    \begin{remark}\label{RM:div-alg-over-sh-rings}
%    	When $k$ is algebraically closed, the only central division $F$-algebra is $F$ itself.
%    	Indeed, let $D$ be a central division algebra. Then $k_D$ is a f.d.\ division algebra
%    	containing the separably closed field $k$, hence $k_D=k$.
%    	Let $\pi$ be a generator of $\frakm_D$ and write $\Jac(R)\calO_D=\pi^n\calO_D$.
%    	Since $k=k_D$, we have $\pi^iR+\pi^{i+1}\calO_D=\pi^i\calO_D$ for all $i\geq 0$.
%    	Applying this repeatedly implies
%    	that $\calO_D=R+\pi R+\pi^2R+\dots+\pi^{n-1}R+\pi^n\calO_D=
%    	R+\pi R+\pi^2R+\dots+\pi^{n-1}R+\calO_D\Jac(R)$, so by Nakayama's Lemma
%    	$\calO_D= R+\pi R+\pi^2R+\dots+\pi^{n-1}R$. This means $\calO_D$ is commutative, so $D=F$.
%    \end{remark}

\medskip

    Given a ring $A$ and ideals $(\fraka_{ij})_{i,j}$, we let
    \[
    \left[
    \begin{array}{ccc}
    (\fraka_{11}) & \dots & (\fraka_{1r}) \\
    \vdots & & \vdots \\
    (\fraka_{r1}) & \dots & (\fraka_{rr})
    \end{array}
    \right]^{(n_1,\dots,n_r)}
    \]
    denote the set of block matrices $(X_{ij})_{ i,j\in\{1,\dots, r\}}$
    for which $X_{ij}$ is an $n_i\times n_j$ matrix with entries in $\fraka_{ij}$.
    If $D$ is a  finite dimensional division $F$-algebra and $n_1,\dots,n_r$ are natural numbers, we write
    \begin{align*}
    \calO_D^{(n_1,\dots,n_r)}&=
    \left[
    \begin{array}{cccc}
    	(\calO_D) & (\frakm_D) & \dots & (\frakm_D) \\
    	\vdots & \ddots & \ddots & \vdots \\
    	\vdots & & \ddots & (\frakm_D) \\
    	(\calO_D) &\dots  & \dots & (\calO_D)
    \end{array}
    \right]^{(n_1,\dots,n_r)}\ ,
    \\
    J_D^{(n_1,\dots,n_r)}&=
    \left[
    \begin{array}{cccc}
    	(\frakm_D) & \ldots & \ldots & (\frakm_D) \\
    	(\calO_D) & \ddots & & \vdots \\
    	\vdots & \ddots & \ddots & \vdots \\
    	(\calO_D) & \ldots & (\calO_D) & (\frakm_D)
    \end{array}
    \right]^{(n_1,\dots,n_r)}\ .
    \end{align*}
    Then $\calO_D^{(n_1,\dots,n_r)}$ is an $R$-order in $\nMat{D}{n_1+\dots+n_r}$
    and its Jacobson radical is $J_D^{(n_1,\dots,n_r)}$.

    \begin{thm}%[{\cite[Th.~39.14, Th.~39.24]{Reiner_03_maximal_orders}}]
    \label{TH:structure-of-hered-rings}
        Let $D$ be a finite dimensional  division $F$-algebra
        and let $A$ be a hereditary $R$-order in $\nMat{D}{n}$.
        Then there are natural numbers $n_1,\dots,n_r$ with $\sum_in_i=n$ such that
        \[
        A\cong  \calO_{D}^{(n_1,\dots,n_r)} .
        \]
        Conversely, any $R$-order of this form is hereditary.
        The tuple $(n_1,\dots,n_r)$ is uniquely determined by $A$ up to a cyclic permutation.
    \end{thm}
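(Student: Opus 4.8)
The plan is to reduce the classification to the well-understood structure of hereditary orders over a complete DVR, or alternatively to work directly with the henselian DVR $R$ using lattice-theoretic arguments, since all the relevant module-theoretic information is insensitive to completion. Concretely, write $\calO=\calO_D$, $\frakm_D$ for its maximal ideal, and $k_D=\calO/\frakm_D$. A hereditary $R$-order $A$ in $B:=\nMat{D}{n}$ contains a maximal order $\calO^{(n)}=\nMat{\calO}{n}$ only after conjugation, so the first step is to realize $A$ as the endomorphism ring of a suitable "chain" of $\calO$-lattices. Recall (from \cite[\S17, \S39]{Reiner_03_maximal_orders}) that $A$ is hereditary if and only if $A=\End_{\calO}(L_1\oplus\dots)$ is described by a \emph{full uniform lattice chain}: a descending chain of full $\calO$-lattices $L_0\supsetneq L_1\supsetneq\dots\supsetneq L_{r-1}\supsetneq L_r=\frakm_D L_0$ in $D^n$, with $A=\{x\in B : xL_i\subseteq L_i \text{ for all } i\}$. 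The case $r=1$ recovers the maximal order. I would take the existence of such a chain as the structural input; if a self-contained argument is wanted, it follows from the fact that $\Jac(A)$ is invertible as a two-sided $A$-ideal (by heredity, via \cite[p.~5]{Auslan_60_maximal_orders}), so the powers of $\Jac(A)$ acting on a simple-socle lattice produce the chain.

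The second step is to put the chain in normal form. Since $\calO$ is a (noncommutative) discrete valuation ring, every $\calO$-lattice in $D^n$ is free; choosing a basis of $L_0$ adapted to the filtration $L_0\supseteq L_1\supseteq\dots\supseteq L_r=\frakm_D L_0$ by the theory of elementary divisors over $\calO$, one sees that after a change of basis (i.e.\ conjugating $A$ by an element of $\nGL{D}{n}$) the lattice $L_i$ is spanned by $\frakm_D e_1,\dots,\frakm_D e_{m_i},e_{m_i+1},\dots,e_n$ for a decreasing sequence $n=m_r>m_{r-1}>\dots>m_1>m_0=0$, hmm — I mean an increasing sequence $0=m_0<m_1<\dots<m_{r-1}<m_r=n$. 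Setting $n_i=m_i-m_{i-1}$, a direct computation of $\{x : xL_i\subseteq L_i\}$ against this explicit basis yields exactly the block description $A\cong\calO_D^{(n_1,\dots,n_r)}$, with $\sum_i n_i=n$, and simultaneously identifies $\Jac(A)$ with $J_D^{(n_1,\dots,n_r)}$. The converse — that every order of the form $\calO_D^{(n_1,\dots,n_r)}$ is hereditary with the stated radical — is the easier direction: one exhibits the lattice chain it stabilizes (the chain of column-type lattices above), or directly verifies that $J_D^{(n_1,\dots,n_r)}$ is a projective right module over $\calO_D^{(n_1,\dots,n_r)}$ by writing it as a direct sum of the columns, each of which is a projective right ideal; then \cite[p.~5]{Auslan_60_maximal_orders} gives heredity.

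For the uniqueness statement, the invariant to extract is intrinsic to $A$: the tuple $(n_1,\dots,n_r)$, up to cyclic permutation, records the sizes of the Wedderburn blocks of the semisimple quotient $A/\Jac(A)$ together with the cyclic arrangement in which $\Jac(A)$ permutes the corresponding indecomposable projectives. Explicitly, $A/\Jac(A)\cong\prod_{i=1}^r \nMat{k_D}{n_i}$, which pins down the multiset $\{n_1,\dots,n_r\}$, and $\Jac(A)/\Jac(A)^2$, as an $(A/\Jac(A))$-bimodule, encodes a cyclic ordering on the factors — the arrow structure of the "cyclic quiver" — because each $\Jac(A)$-action sends the $i$-th projective to the $(i-1)$-st (indices mod $r$). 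Two such orders are isomorphic iff these cyclically-ordered tuples agree, i.e.\ iff the tuples differ by a cyclic permutation; one checks that any cyclic permutation is realized by a conjugation (a cyclic block permutation matrix), and no other permutation can be, since a non-cyclic relabeling would destroy the arrow structure. I expect this uniqueness/rigidity part — making precise that the cyclic ordering is a genuine isomorphism invariant, and that exactly the cyclic permutations are induced by algebra isomorphisms — to be the main obstacle, as the existence and converse parts are essentially explicit matrix bookkeeping once the lattice chain is in hand.
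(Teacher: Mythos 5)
Your outline is mathematically sound, but it is pitched at a different level than the paper's proof: the paper disposes of the central case ($\Cent(D)=F$) in one line by citing \cite[Theorems~39.14 and~39.24]{Reiner_03_maximal_orders}, and spends its only real effort on reducing the general case to the central one, by observing that the integral closure of $R$ in $\Cent(D)$ is again a henselian DVR, finitely generated over $R$. What you have written is essentially a reconstruction of the proof of the cited Reiner theorems themselves --- the lattice-chain characterization of hereditary orders (via invertibility of $\Jac(A)$), the elementary-divisor normal form giving the block shape $\calO_D^{(n_1,\dots,n_r)}$, and the uniqueness via $A/\Jac(A)\cong\prod_i\nMat{k_D}{n_i}$ together with the cyclic quiver structure of $\Jac(A)/\Jac(A)^2$. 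All of this is correct and is the standard argument; your identification of the stabilizer of the normalized chain with the lower-block-triangular order checks out, and the rigidity argument for why only cyclic permutations of $(n_1,\dots,n_r)$ can be realized is the right one.

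The one step you skip is the one step the paper actually carries out: the case $\Cent(D)\supsetneq F$. Your lattice-chain argument works with $\calO_D$-lattices, and for a hereditary $R$-order $A$ to be the stabilizer of a chain of $\calO_D$-lattices you need to know that $A$ contains the integral closure $\calO_K$ of $R$ in $K:=\Cent(D)$, i.e.\ that $A$ is automatically an $\calO_K$-order --- otherwise $A$ cannot equal $\{x: xL_i\subseteq L_i\text{ for all }i\}$, which always contains $\calO_K$. This is true (the center of a hereditary order is integrally closed; see \cite[\S40]{Reiner_03_maximal_orders}), but it needs to be said, and once said it reduces the theorem to the central case over the henselian DVR $\calO_K$, exactly as in the paper. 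A second, smaller point worth a sentence in either write-up: Reiner's \S39 is stated for complete DVRs, whereas here $R$ is only henselian; your opening remark that the module-theoretic input is insensitive to this is correct (the key facts --- Theorem~\ref{TH:structure-hered-div}, freeness of $\calO_D$-lattices, and Lemma~\ref{LM:hereditary-descent} for descending the block type from the completion --- all hold henselian-locally), but it is an assumption being used, not a triviality.
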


	\begin{proof}
		When $\Cent(D)=F$, this is \cite[Theorem~39.14, Theorem~39.24]{Reiner_03_maximal_orders}.
		The general case follows by observing that the integral closure
		of $R$ in $\Cent(D)$ is itself a henselian DVR which is finitely
		generated as an $R$-module; see \cite[\S12,\S13]{Reiner_03_maximal_orders},
		for instance.
%		
%		an $R$-order  when $A$ is hereditary;
%		see \cite[Theorem 1.7.1]{Hijika_94_Bass_orders}, for instance.
	\end{proof}

    For $A$ as in the theorem, we denote the equivalence class of $(n_1,\dots,n_r)$ under
    cyclic permutations by
    \[
    \inv(A).   
    \]
	The class $\inv(A)$ and the  simple $F$-algebra $A\otimes F$ determine
	$A$ up to isomorphism.
	
	For brevity, we shall write
	$(n_1,\dots,n_r)^s$ to denote
	the concatenation of $s$ copies of $(n_1,\dots,n_r)$, e.g.\ $(n)^s=(n,\dots,n)$ ($s$ times).

    \begin{prp}\label{PR:inv-over-Rsh}
    	Let $D$ be a central division $F$-algebra,   let $A=\calO_D^{(n_1,\dots,n_r)}$
    	and let
    	$k_D=\calO_D/\frakm_D$.
    	Let  $k''=\Cent(k_D)$ and let $k'$ denote the maximal 
    	subfield of $k''$ which is separable over $k$.
    	Write  $t=[k':k]$
    	and $s=\deg k_D = \sqrt{[k_D:k'']}$. Then
    	\[
		\inv(A\otimes {\sh{R}})=
		%(\,\underbrace{{sn_1,\dots,sn_r}\,,\quad\dots\quad,\,
		%{sn_1,\dots,sn_r}}_{\text{$tr$ terms}}\,)\ .
		(sn_1,\dots,sn_r)^t    	
    	\]
    	In particular, if $\inv(A\otimes {\sh{R}})=(m_1,\dots,m_u)$,
    	then $\inv(A)=(\frac{m_1}{s},\dots,\frac{m_r}{s})$.
    \end{prp}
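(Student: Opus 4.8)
The plan is to reduce the general statement, by a ``composition'' identity for the orders of Theorem~\ref{TH:structure-of-hered-rings}, to the single case $A=\calO_D$, and to treat that case by computing the semisimple quotient of $\calO_D\otimes\sh R$ modulo its radical. First I would record the standard facts about $\sh R$: it is again a (henselian) discrete valuation ring, its residue field $\sh k$ is a separable closure of $k$ (so $\sh k/k$ is separable algebraic), and $\sh R$ is faithfully flat over $R$; hence Lemma~\ref{LM:hereditary-descent} applies with $R'=\sh R$. Since the maximal order $\calO_D$ is hereditary, $\calO_D\otimes\sh R$ is a hereditary $\sh R$-order in $D\otimes_F\sh F$, and $\frakm_D\otimes\sh R=\Jac(\calO_D\otimes\sh R)$ by part~(i) of that lemma. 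Write $D\otimes_F\sh F\cong\nMat{D'}{m}$ with $D'$ a central division $\sh F$-algebra, and let $\frakm_{D'}$, $k_{D'}=\calO_{D'}/\frakm_{D'}$ be attached to $D'/\sh F$ as in Theorem~\ref{TH:structure-hered-div}.

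Next I would show $\inv(\calO_D\otimes\sh R)=(s)^t$. By Theorem~\ref{TH:structure-of-hered-rings}, $\calO_D\otimes\sh R\cong\calO_{D'}^{(a_1,\dots,a_w)}$ for some $a_1,\dots,a_w$ with $\sum_i a_i=m$, so the quotient of $\calO_D\otimes\sh R$ by its radical is $\prod_{i=1}^{w}\nMat{k_{D'}}{a_i}$; on the other hand, by Lemma~\ref{LM:hereditary-descent}(i) this quotient equals $(\calO_D/\frakm_D)\otimes_k\sh k=k_D\otimes_k\sh k$. So the crux is to decompose $k_D\otimes_k\sh k$. With $k''=\Cent(k_D)$ and $k'\subseteq k''$ the maximal subfield separable over $k$, base-change transitivity gives
\[
k_D\otimes_k\sh k\;=\;k_D\otimes_{k''}\bigl(k''\otimes_{k'}(k'\otimes_k\sh k)\bigr).
\]
Since $k'/k$ is separable of degree $t$ and $\sh k$ is a separable closure of $k$, $k'\otimes_k\sh k$ is a product of $t$ copies of $\sh k$, whence $k''\otimes_k\sh k\cong\ell\times\cdots\times\ell$ ($t$ factors) with $\ell:=k''\otimes_{k'}\sh k$. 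Because $k''/k'$ is purely inseparable while $\sh k/k'$ is separable, $\ell$ is a field, finite and purely inseparable over $\sh k$; and a finite purely inseparable extension of a separably closed field is again separably closed, so $\Br(\ell)=0$. Hence the degree-$s$ central simple $\ell$-algebra $k_D\otimes_{k''}\ell$ is split, $k_D\otimes_{k''}\ell\cong\nMat{\ell}{s}$, and therefore $k_D\otimes_k\sh k\cong\nMat{\ell}{s}\times\cdots\times\nMat{\ell}{s}$ ($t$ factors). Comparing this with $\prod_{i=1}^{w}\nMat{k_{D'}}{a_i}$ and using that the field $\ell$ is the division part of $\nMat{\ell}{s}$, the uniqueness of the Wedderburn decomposition forces $w=t$, $k_{D'}\cong\ell$, and $a_1=\cdots=a_w=s$. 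As all the $a_i$ coincide the cyclic order is immaterial, so $\inv(\calO_D\otimes\sh R)=(s)^t$, i.e.\ $\calO_D\otimes\sh R\cong\calO_{D'}^{(s)^t}$.

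Finally I would deduce the general case. Because $\sh R$ is $R$-flat and $\frakm_D\otimes\sh R=\Jac(\calO_D\otimes\sh R)$, applying $-\otimes\sh R$ to the block description of $A=\calO_D^{(n_1,\dots,n_r)}$ gives, entrywise,
\[
A\otimes\sh R\;=\;(\calO_D\otimes\sh R)^{(n_1,\dots,n_r)}\;\cong\;\bigl(\calO_{D'}^{(s)^t}\bigr)^{(n_1,\dots,n_r)}\;\cong\;\calO_{D'}^{(sn_1,\dots,sn_r)^t},
\]
where in the outer construction the strictly upper blocks are filled with $\Jac(\calO_{D'}^{(s)^t})=J_{D'}^{(s)^t}$, and the last isomorphism is a direct inspection of the resulting block matrix over $D'$: grouping its rows and columns by the pair (inner $\calO_{D'}^{(s)^t}$-index, outer $\calO_D^{(n_1,\dots,n_r)}$-index) and ordering the groups lexicographically, inner index first and outer index second, the group at outer position $i$ has size $sn_i$ and these appear in $t$ consecutive runs $(sn_1,\dots,sn_r)$, so the grouped structure is precisely that of $\calO_{D'}^{(sn_1,\dots,sn_r)^t}$. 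Hence $\inv(A\otimes\sh R)=(sn_1,\dots,sn_r)^t$. For the last assertion, this representative is $r$-periodic, so writing $\inv(A\otimes\sh R)=(m_1,\dots,m_u)$ the first $r$ entries $(m_1,\dots,m_r)$ form a cyclic rotation of $(sn_1,\dots,sn_r)$, and dividing by $s$ recovers $\inv(A)=(n_1,\dots,n_r)$.

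The step I expect to be the crux is the decomposition of $k_D\otimes_k\sh k$: the delicate point is to disentangle the \emph{separable} behaviour of $\Cent(k_D)$, which is responsible for the multiplicity $t$ in the number of blocks, from its \emph{purely inseparable} behaviour, which produces the single residually inseparable field $\ell$ over which $k_D$ — now a central simple algebra over a separably closed field — necessarily splits, accounting for the factor $s$ in the block sizes. The composition identity of the last step is essentially bookkeeping, its only subtlety being to keep the upper/lower triangular conventions of Theorem~\ref{TH:structure-of-hered-rings} straight.
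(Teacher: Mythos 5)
Your proposal is correct and follows essentially the same route as the paper's proof: first compute $\inv(\calO_D\otimes\sh R)=(s)^t$ by identifying $(\calO_D\otimes\sh R)/\Jac(\calO_D\otimes\sh R)\cong k_D\otimes_{k''}(k''\otimes_{k'}(k'\otimes_k\sh k))$ and comparing with the Wedderburn decomposition coming from Theorem~\ref{TH:structure-of-hered-rings}, then obtain the general case by tensoring the block description entrywise and conjugating by a permutation matrix that regroups the nested blocks. The only cosmetic differences are that you make the splitting step explicit via $\Br(\ell)=0$ and describe the final permutation as a lexicographic regrouping rather than by the paper's explicit index formula.
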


    \begin{proof}
	 	We first claim that $\inv(\calO_D\otimes \sh{R})=(s)^t$.
    	Let $k_{\mathrm{s}}$ denote the residue field of $\sh{R}$, which is also a separable
    	closure of $k$.
    	By Lemma~\ref{LM:hereditary-descent}(i), $\Jac(\calO_D\otimes \sh{R})=\frakm_D\otimes \sh{R}$,
    	hence 
    	\begin{equation}
    	\label{EQ:calO-quotient}
    	\frac{(\calO_D\otimes \sh{R})}{\Jac(\calO_D\otimes \sh{R})}\cong k_D\otimes \sh{R}\cong
    	k_D\otimes_k k_{\mathrm{s}} %\cong k_D\otimes_{k''} (k''\otimes_k k_{\mathrm{s}})
    	\cong k_D\otimes_{k''}(k''\otimes_{k'} (k'\otimes_{k}k_{\mathrm{s}})).
    	\end{equation} 
    	By assumption, $k'\otimes_k k_{\mathrm{s}}\cong k_{\mathrm{s}}\times \dots\times k_{\mathrm{s}}$
    	($t$ times).
    	Since $k''$ is a purely inseparable finite extension of $k'$, the tensor
    	product $\tilde{k}:=k''\otimes_{k'}k_{\mathrm{s}}$ is a separably closed field,
    	and hence $k_D\otimes_{k''}\tilde{k}\cong \nMat{\tilde{k}}{s}$.
    	Putting this into \eqref{EQ:calO-quotient},
    	we get
    	\begin{equation}
    	\label{EQ:D-mod-I}
    	\frac{\calO_D\otimes \sh{R}}{\Jac(\calO_D\otimes \sh{R})}\cong 
    	k_D\otimes_{k''}(\,\underbrace{\tilde{k}\times \dots\times \tilde{k}}_{t}\,)\cong
    	\underbrace{\nMat{\tilde{k}}{s}\times \dots\times\nMat{\tilde{k}}{s}}_{t} .
    	\end{equation}
		On the other hand, writing  $ \calO_D\otimes \sh{R} \cong \calO_E^{(m_1,\dots,m_u)}$
		for a suitable central division $\sh{F}$-algebra $E$,
		we see 
    	that 
    	\begin{equation}
    	\label{EQ:D-mod-II}
    	\frac{\calO_D\otimes \sh{R}}{\Jac(\calO_D\otimes \sh{R})}
    	\cong\frac{\calO_E^{(m_1,\dots,m_u)}}{J_E^{(m_1,\dots,m_u)}}
    	\cong  \nMat{k_E}{m_1}\times
    	\dots\times \nMat{k_E}{m_u}.
    	\end{equation}
    	The claim  
    	follows 
		by comparing the right hand sides of  
		\eqref{EQ:D-mod-I} and \eqref{EQ:D-mod-II}.

\smallskip    	
    	
    	We now prove the proposition. 
		The previous paragraph
		implies that $\calO_D\otimes \sh{R}\cong \calO_E^{(s)^t}$,
		and by Lemma~\ref{LM:hereditary-descent}(i),
		this isomorphism restricts to an isomorphism   
		$\frakm_D\otimes \sh{R}=J_E^{(s)^t}$.
		As a result, 
    	we have
    	\begin{equation*}\label{EQ:scalar-ext}
    	\calO_D^{(n_1,\dots,n_r)}\otimes \sh{R}\cong
    	\left[
		\begin{matrix}
    	(\calO_E^{(s,\dots,s)}) & (J_E^{(s,\dots,s)}) & \dots & (J_E^{(s,\dots,s)}) \\
    	(\calO_E^{(s,\dots,s)}) & (\calO_E^{(s,\dots,s)}) & \ddots & \vdots \\
    	\vdots & \ddots & \ddots & (J_E^{(s,\dots,s)}) \\
    	(\calO_E^{(s,\dots,s)}) &\dots  & \dots & (\calO_E^{(s,\dots,s)})
    	\end{matrix}
    	\right]^{(n_1,\dots,n_r)}\ .
    	\end{equation*}
    	Conjugating the right hand side by a suitable permutation matrix in $\nMat{E}{ts\sum_in_i}$
    	yields the proposition. Explicitly, writing $n=\sum_in_i$,
    	the required permutation
    	sends $i+s(j-1)+st(k-1)$ with $i\in \{1,\dots,s\}$, $j\in \{1,\dots,t\}$, $k\in \{1,\dots,n\}$,  
    	to $i+s(k-1)+sn(j-1)$.
    \end{proof}

    \begin{cor}\label{CR:Aut-A-henselian-case}
    	Suppose $R$ a henselian DVR and let $A$, $A'$ be two hereditary $R$-orders in central simple $F$-algebras.
		If $A$ and $A'$ become isomorphic after extending scalars to $F$ and some 
		faithfully flat \'etale $R$-algebra,
		then $A\cong A'$.
    \end{cor}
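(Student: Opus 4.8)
The plan is to reduce the statement to the strict henselization $\sh R$, where Proposition~\ref{PR:inv-over-Rsh} computes the invariant $\inv$, and then to descend back to $R$ by an elementary combinatorial step. First I would pass to $\sh R$: the hypothesis furnishes an isomorphism $A\otimes F\cong A'\otimes F$ together with a faithfully flat \'etale $R$-algebra $S$ and an isomorphism $A\otimes S\cong A'\otimes S$ of $S$-algebras. Since $R$ is henselian and local there is an $R$-algebra homomorphism $S\to\sh R$ --- exactly as in the proof of Corollary~\ref{CR:Aut-A-works}, because $S\otimes\sh R$ is a nonzero \'etale algebra over the strictly henselian local ring $\sh R$ and hence admits $\sh R$ as a quotient. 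Applying $-\otimes_S\sh R$ to the isomorphism over $S$ gives $A\otimes\sh R\cong A'\otimes\sh R$ as $\sh R$-algebras.

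Next I would bring in the structure theory. Using $A\otimes F\cong A'\otimes F$, fix a central division $F$-algebra $D$ and $n\in\N$ with $A\otimes F\cong\nMat{D}{n}\cong A'\otimes F$, and view $A,A'$ as $R$-orders in $B:=\nMat{D}{n}$; by Theorem~\ref{TH:structure-of-hered-rings} we may assume $A=\calO_D^{(n_1,\dots,n_r)}$ and $A'=\calO_D^{(n'_1,\dots,n'_{r'})}$. Since $\inv$ together with the algebra $A\otimes F$ determines a hereditary order in a central simple $F$-algebra up to isomorphism, it suffices to prove $\inv(A)=\inv(A')$, i.e.\ that $(n_1,\dots,n_r)$ and $(n'_1,\dots,n'_{r'})$ agree up to a cyclic permutation. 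Now $A\otimes\sh R$ and $A'\otimes\sh R$ are hereditary (Corollary~\ref{CR:hereditary-descent}) $\sh R$-orders in the central simple $\sh F$-algebra $B\otimes_F\sh F$, and are isomorphic by the first step, so the uniqueness clause of Theorem~\ref{TH:structure-of-hered-rings} over the henselian DVR $\sh R$ gives $\inv(A\otimes\sh R)=\inv(A'\otimes\sh R)$. Proposition~\ref{PR:inv-over-Rsh} computes both sides: with $k_D=\calO_D/\frakm_D$, $k''=\Cent(k_D)$, $k'$ the maximal separable subextension of $k''/k$, $t=[k':k]$ and $s=\deg k_D$, one has $\inv(A\otimes\sh R)=(sn_1,\dots,sn_r)^{t}$ and $\inv(A'\otimes\sh R)=(sn'_1,\dots,sn'_{r'})^{t}$; crucially $s$ and $t$ are intrinsic to $D$, hence to $B$, so they are the same for $A$ and for $A'$.

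It remains to carry out the combinatorial descent, which I expect to be the only place requiring (minor) care. All entries on both sides are divisible by $s$, so $(n_1,\dots,n_r)^{t}$ and $(n'_1,\dots,n'_{r'})^{t}$ agree up to a cyclic permutation; comparing total lengths forces $r=r'$. For any tuple $\vec u$ of length $r$ the cyclic rotations of the $t$-fold concatenation $\vec u^{\,t}$ are exactly the $(\rho^i\vec u)^{t}$, where $\rho$ is a cyclic shift; hence the relation just obtained forces $(\rho^i(n_1,\dots,n_r))^{t}=(n'_1,\dots,n'_{r})^{t}$ as plain tuples for some $i$, and comparing the first $r$ coordinates yields $\rho^i(n_1,\dots,n_r)=(n'_1,\dots,n'_{r})$. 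Thus $\inv(A)=\inv(A')$, and by the reduction above $A\cong A'$. (Equivalently, this last step is the ``in particular'' clause of Proposition~\ref{PR:inv-over-Rsh}, recording that $\inv(A)$ is recovered from $\inv(A\otimes\sh R)$ and $s$.) No genuine obstacle arises: the substantive input, Proposition~\ref{PR:inv-over-Rsh}, is already in hand, and the rest is bookkeeping --- the one subtlety being that $s$ and $t$ are attached to $B$ rather than to the individual orders, so that the two invariants over $\sh R$ really are comparable.
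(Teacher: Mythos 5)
Your proposal is correct and follows essentially the same route as the paper: pass to $\sh R$ via an $R$-algebra map $S\to\sh R$, invoke the structure theory and Proposition~\ref{PR:inv-over-Rsh} to compare $\inv(A\otimes\sh R)$ with $\inv(A'\otimes\sh R)$, and recover $\inv(A)=\inv(A')$. The paper compresses your final combinatorial descent into the ``in particular'' clause of Proposition~\ref{PR:inv-over-Rsh}; your explicit verification of it (periodicity of $\vec u^{\,t}$ under cyclic shifts, and the fact that $s,t$ depend only on $D$) is accurate.
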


    \begin{proof}
    	Write $A\otimes F=\nMat{D}{n}$ and $A'\otimes F=\nMat{D'}{n'}$.
    	Since $A\otimes F\cong A'\otimes F$, we have $D\cong D'$. Since $A$ and $A'$ become isomorphic over a faithfully
    	flat \'etale $R$-algebra,
    	and since any faithfully flat \'etale $R$-algebra admits an $R$-algebra morphism
    	into $\sh{R}$,
    	we have $A\otimes  \sh{R}\cong A'\otimes  \sh{R}$. Now, by Proposition~\ref{PR:inv-over-Rsh},
    	$\inv(A)=\inv(A')$, so $A\cong A'$.
    \end{proof}

\subsection{Proof of Theorem~\ref{TH:Aut-A-works}}

	Recall that $R$ is assumed to be a semilocal Dedekind domain,
	$A$ is a hereditary $R$-order and $A'$ is any $R$-order.

		Arguing as in the proof of Corollary~\ref{CR:Aut-A-works}, 
		we see that $A'$ is hereditary.
		Now, Corollary~\ref{CR:Aut-A-henselian-case} implies that
		$A\otimes \hen{R_\frakp}\cong A'\otimes \hen{R_\frakp}$ for all 
		$\frakp\in\Max R$.
		We finish by using a patching argument to show that $A\cong A'$.
		
		Let $S=\prod_{\frakp\in \Max R} \hen{R_\frakp}$ and $K=\prod_{\frakp\in\Max R} \hen{F_\frakp}$.
		Then there are algebra isomorphisms $\phi:A\otimes F\cong A'\otimes F$ and $\psi:A\otimes S\cong A'\otimes S$.
		Consider $\psi^{-1}_K\phi_K:A\otimes K\to A\otimes K$. 
		Since $A\otimes F$ is a central simple $F$-algebra, the Skolem-Noether Theorem implies that there
		exists $a\in \units{(A\otimes K)}$ such that $\psi^{-1}_K\phi_K=\mathrm{Int}(a):=[x\mapsto axa^{-1}]$.
		A standard density argument (e.g.\ see the second paragraph in the proof of \cite[Theorem~5.1]{Bayer_17_weak_approx})
		implies that there are $b\in \units{(A\otimes F)}$, $c\in\units{(A\otimes S)}$ such
		that $a=c^{-1}b$. Then $\psi^{-1}_K\phi_K=\mathrm{Int}(c)^{-1}_K\mathrm{Int}(b)_K$,
		or rather
		$(\Int(c)\psi^{-1})_K=(\Int(b)\phi^{-1})_K$. Now, a patching argument (e.g.\ see
		\cite[Lemma~4.2(i)]{Bayer_17_weak_approx}) implies that there is an $R$-module isomorphism $\eta:A'\to A$
		such that $\eta_S=\Int(c)\psi^{-1}$ and $\eta_F=\Int(b)\phi^{-1}$. It is easy to see that $\eta$
		is an isomorphism of $R$-algebras, which completes the proof.
		\qed
	
\subsection{Counterexamples}

\label{subsec:counterex}	
	
	We finish with noting that   Theorem~\ref{TH:Aut-A-works} may fail
	if some of the assumptions are dropped.

\medskip

	We begin with observing that Theorem~\ref{TH:Aut-A-works} can fail for hereditary orders in simple, non-central, $F$-algebras.
	
	\begin{example}\label{EX:order-I}
		Let $R$ be a DVR with fraction field $F$.
		For brevity, denote the henselization of $R$ and its fraction
	field by $R'$ and $F'$, respectively.
	Suppose that there exists a cubic field extension $K/F$
	such that:
	\begin{itemize}
		\item $\Gal(K/F)=\{\id_K\}$, and
		\item $K\otimes F'\cong F'\times F'\times F'$.
	\end{itemize}
	(Explicit   choices with these properties are
	$R=\Z_{(7)}$, $F=\Q$, $K=\Q[\sqrt[3]{6}]$.)
	
	Let $\frakm'$ denote the maximal ideal of $R'$.
	Define $R'$-orders $B_1$ and $B_2$ in $\nMat{K}{2}\otimes F'\cong \nMat{F'}{2}^3$ as follows:
	\begin{align*}
	B_1&=
	\SMatII{R'}{\frakm'}{R'}{R'}\times
	\SMatII{R'}{\frakm'}{R'}{R'}\times
	\SMatII{R'}{R'}{R'}{R'}, \\
	B_2&=
	\SMatII{R'}{\frakm'}{R'}{R'}\times
	\SMatII{R'}{R'}{R'}{R'}\times
	\SMatII{R'}{\frakm'}{R'}{R'},
	\end{align*}
	and let
	\[
	A_i=\nMat{K}{2}\cap B_i\qquad(i=1,2).
	\]
	Observe that   $B_1$ and $B_2$ are   isomorphic as $R'$-algebras, but
	not as $R'\times R'\times R'$-algebras.
	
	It is easy to see that $A_1$ and $A_2$ are
	orders in $\nMat{K}{2}$ satisfying $A_i\otimes R'\cong B_i$ ($i=1,2$).
	Since $B_1$ and $B_2$ are hereditary (Theorem~\ref{TH:structure-of-hered-rings}),
	so are $A_1$ and $A_2$ (Corollary~\ref{CR:hereditary-descent}).
	
	Now, $A_1\otimes R'\cong A_2\otimes R'$ as $R'$-algebras.
	Since $R'$ is a direct limit of faithfully flat \'etale $R$-algebras,
	the $R$-orders $A_1$ and $A_2$ become isomorphic after base change to some  faithfully flat
	\'etale $R$-algebra. In addition, we have $A_1\otimes F\cong \nMat{K}{2}\cong A_2\otimes F$.
	However, we claim that $A_1\ncong A_2$.
	
	To see this, suppose $\vphi:A_1\to A_2$ is an isomorphism of $R$-algebras.
	Then $\vphi$ extends to an $F$-automorphism of $\nMat{K}{2}$.
	Since $\Gal(K/F)=\{\id_K\}$, the isomorphism $\vphi$ is $K$-linear,
	hence $\vphi_{R'}:A_1\otimes R'\to A_2\otimes R'$ is an $R'\times R'\times R'$-linear
	isomorphism, which cannot exist by our choice of the $R'$-orders $B_1$ and $B_2$.
	\end{example}
	
	In the previous example,
	the orders are isomorphic over
	the henselizations, but this fails to descend to the original ring.
	The next example shows that problems can also occur in passing from the strict
	henselization to the henselization
	if one allows orders in \emph{semisimple} $F$-algebras; compare
	with   Corollary~\ref{CR:Aut-A-henselian-case}.
	
	\begin{example}
		Suppose $R$ is a henselian DVR and $F$ admits a non-commutative central division algebra
		$D$ such that $D\otimes \sh{F}\cong \nMat{\sh{F}}{n}$. 
		(For example, take $R=\Z_p$, $F=\Q_p$
		and let $D$ be the unique quaternion division $F$-algebra.) 
		Using Corollary~\ref{CR:Aut-A-henselian-case},
		write $\inv(\calO_D\otimes \sh{R})=(s)^t$.
		Let 
		\[E=\nMat{D}{2}\times   \nMat{F}{2n}
		%\nMat{D}{2}\times \nMat{D}{2}\times \nMat{F}{2n}\times \nMat{F}{2n}
		\]
		and define orders in $E$ as follows:
		\begin{align*}
		%A_1&=\calO_D^{(1,1)}\times \calO_D^{(2)}\times \calO_F^{(s)^{2t}}\times \calO_F^{(2s)^t},\\
		%A_2&=\calO_D^{(2)}\times \calO_D^{(2)}\times \calO_F^{(s)^{2t}}\times \calO_F^{(s)^{2t}}.
		A_1&=\calO_D^{(1,1)}\times  \calO_F^{(2s)^{t}},\\
		A_2&=\calO_D^{(2)}\times  \calO_F^{(s)^{2t}}.
		\end{align*}
		Proposition~\ref{PR:inv-over-Rsh} implies that 
		$A_1\otimes \sh{R}\cong
		\calO_{\sh{F}}^{(s)^{2t}}\times \calO_{\sh{F}}^{(2s)^t}
		\cong
		A_2\otimes \sh{R}$,
		and $A_1\otimes F\cong A_2\otimes F$ is clear.
		Since $\sh{R}$ is a direct limit of faithfully flat \'etale $R$-algebras,
		this means that $A_1$ and $A_2$ become isomorphic over $F$ and some faithfully flat \'etale $R$-algebra.
		On the other hand, an
		$R$-algebra isomorphism $A_1\to A_2$
		will necessarily induce an isomorphism $\calO_D^{(1,1)} \to \calO_D^{(2)}$,
		which is impossible by Theorem~\ref{TH:structure-of-hered-rings}.
	\end{example}
	
	Finally, we note that Theorem~\ref{TH:Aut-A-works} may fail if $R$ is a Dedekind domain which is not semilocal.
	
	\begin{example}
		Let $R$ be a Dedekind domain whose class group is a nontrivial $2$-torsion group, let $I$ and
		$I'$ be two non-isomorphic fractional ideals of $R$, and let $A=\End_R(R\oplus I)$ and $A'=\End_R(R\oplus I')$.
		It is easy to check that the $R$-modules $R\oplus I$ and $R\oplus I'$ become isomorphic over $F$ and
		some faithfully flat \'etale $R$-algebra (e.g.  a suitable  
		Zariski covering of $\Spec R$), 
		so the same holds for $A$ and $A'$. However, $A\ncong A'$ as $R$-algebras.
		Indeed, for the sake of
		contradiction, assume that there is an isomorphism of $R$-algebra $\phi:A'\to A$. Then we may view $R\oplus I'$
		as a left $A$-module via $\phi$ and form $J:=\Hom_A(R\oplus I,R\oplus I')$. Morita Theory implies that $J$
		is a fractional ideal of $R$ satisfying $(R\oplus I)\otimes J\cong R\oplus I'$. However, the latter implies
		$IJ^2= I'J^2$ in $\mathrm{Cl}(R)$, which is impossible by our choice of $I$, $I'$ and the fact that $\mathrm{Cl}(R)$ is a $2$-torsion group.
	\end{example}

\section{$\uPGL_1(A)$-Torsors}

	Let $R$ an commutative ring and let $A$ be
	a  finite  projective $R$-algebra.
	As usual, let $\uGL_1(A)$ denote
	the $R$-group scheme determined by $\uGL_1(A)(S)=\units{(A\otimes S)}$.
	The group $R$-scheme $\uPGL_1(A)$
	is defined to be the $R$-scheme representing  the quotient sheaf 
	$\uGL_1(A)/\nGm{R}$ on the flat (fpqc)
	site of $\Spec R$. To see that
	it exists, apply \cite[XVI, Corollaire 2.3]{SGA3}
	to the morphism $\uGL_1(A)\to \uAut_R(A)$ sending
	a section to its corresponding inner automorphism.
	The group $\uPGL_1(A)$ 
	is smooth over $  R$
	by \cite[VI{${}_{\text{B}}$}, Proposition~9.2(xii)]{SGA3}, because
	$\uGL_1(A)$ is smooth and $\nGm{R}$ is flat over $R$.

	\begin{thm}\label{TH:PGL-A-works}
		Let $R$ be a regular semilocal domain with fraction field $F$ and let $A$ be a
		finite projective $R$-algebra. Then the restriction map
		\[
		\HH^1_\et(R,\uPGL_1(A))\to \HH^1_\et(F,\uPGL_1(A))
		\]
		has trivial kernel.
	\end{thm}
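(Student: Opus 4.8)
The plan is to use the central extension of smooth $R$-group schemes
\[
1\longrightarrow \nGm{R}\longrightarrow \uGL_1(A)\longrightarrow \uPGL_1(A)\longrightarrow 1
\]
that was used to construct $\uPGL_1(A)$, together with its connecting map in nonabelian cohomology (formed for the fppf topology, which agrees with the \'etale topology here since $\nGm{R}$, $\uGL_1(A)$ and $\uPGL_1(A)$ are all smooth),
\[
\delta_R\colon \HH^1_\et(R,\uPGL_1(A))\longrightarrow \HH^2_\et(R,\nGm{R}),
\]
a functorial map of pointed sets whose fiber $\delta_R^{-1}(0)$ over the base point is, by exactness of the cohomology sequence of a central extension, the image of $\HH^1_\et(R,\uGL_1(A))$. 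It will therefore suffice to establish two facts. First, \textbf{(a):} $\HH^1_\et(R,\uGL_1(A))$ is trivial; here I will use that $R$ is semilocal. Second, \textbf{(b):} the restriction $\HH^2_\et(R,\nGm{R})\to\HH^2_\et(F,\nGm{F})=\Br(F)$ is injective; here I will use that $R$ is regular, invoking the classical theorem of Auslander--Goldman and Grothendieck (\emph{Le groupe de Brauer}, II), which is the essential external ingredient. Granting (a) and (b): if $\xi\in\HH^1_\et(R,\uPGL_1(A))$ becomes trivial over $F$, then by functoriality $\delta_R(\xi)$ restricts to $\delta_F(1)=0$ in $\HH^2_\et(F,\nGm{F})$, so $\delta_R(\xi)=0$ by (b), so $\xi$ is trivial by (a). This is precisely the assertion of the theorem.

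To prove (a): since $\uGL_1(A)$ is the automorphism group scheme of $A$ regarded as a right module over itself, $\uGL_1(A)$-torsors over $R$ are exactly the right $A$-modules $\calP$ that become isomorphic to $A$ over some faithfully flat \'etale $R$-algebra; any such $\calP$ is finitely generated projective over $A$, hence over $R$, of the same $R$-rank as $A$. I would reduce modulo $\rad(R)$, the Jacobson radical of the semilocal ring $R$ (the intersection of its finitely many maximal ideals): then $\bar A:=A/\rad(R)A$ is finite over the semisimple ring $R/\rad(R)$, hence a finite product of finite-dimensional algebras over fields, and $\bar\calP:=\calP/\rad(R)\calP$ is an \'etale form of $\bar A$ as a right $\bar A$-module. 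The key input at this stage is that over a field $k$, every \'etale form $\calQ$ of a finite-dimensional $k$-algebra $B$, regarded as a right $B$-module, is free of rank one: $B$ is artinian, hence semiperfect, so $\calQ\cong B$ once $\calQ/\Jac(B)\calQ\cong B/\Jac(B)$ as $B/\Jac(B)$-modules; and $\calQ/\Jac(B)\calQ$ is itself an \'etale form of the free rank-one module over the semisimple algebra $B/\Jac(B)$, which a dimension count on its simple factors shows must be free of rank one. Applying this factorwise gives $\bar\calP\cong\bar A$. Finally I would lift an isomorphism $\bar A\xrightarrow{\sim}\bar\calP$ to an $A$-linear map $\alpha\colon A\to\calP$ (possible because $\Hom_A(A,\calP)=\calP$ surjects onto $\bar\calP$); then $\alpha$ is surjective by Nakayama's lemma, using $\rad(R)A\subseteq\Jac(A)$ and that $\calP$ is finitely generated over $A$; the short exact sequence $0\to\ker\alpha\to A\to\calP\to 0$ splits because $\calP$ is $A$-projective; and $\ker\alpha$ is then finitely generated projective over $R$ of rank $\rank_R A-\rank_R\calP=0$, hence zero. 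So $\alpha$ is an isomorphism, proving (a). (The same argument gives $\HH^1_\et(F,\uGL_1(A\otimes F))$ trivial as well, since $F$ is a field, though this is not needed above.)

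I expect the main obstacle to lie in the careful execution of (a): making the module-theoretic identification of $\HH^1_\et(R,\uGL_1(A))$ precise, carrying out the d\'evissage over the residue fields (reduction to the semisimple case via semiperfectness, and the dimension count there), and verifying $\rad(R)A\subseteq\Jac(A)$ so that Nakayama's lemma applies over $A$. By contrast, the formal properties of the connecting homomorphism of a central extension, and the injectivity statement (b), are standard facts that can simply be quoted.
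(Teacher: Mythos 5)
Your proposal is correct and follows essentially the same route as the paper: the same central extension $1\to \nGm{R}\to\uGL_1(A)\to\uPGL_1(A)\to 1$, the same two key facts (triviality of $\HH^1(R,\uGL_1(A))$ using semilocality, and injectivity of $\HH^2(R,\uGm)\to\HH^2(F,\uGm)$ using regularity via Grothendieck), and the same diagram chase. The only difference is that the paper quotes \cite[Proposition~2.11]{Bayer_17_rational_iso_herm} for fact (a), whereas you prove it directly by reduction modulo $\rad(R)$ and Nakayama; your argument is sound and self-contained, but it is not a different strategy.
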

	
	The injectivity of $\HH^1_\et(R,\uPGL_1(A))\to \HH^1_\et(F,\uPGL_1(A))$
	is more delicate and so far we were unable to establish it.

\begin{proof}
We first observe that since $\uPGL_1(A)$ is smooth over $R$,
the canonical map $\HH^1_{\et}(R,\uPGL_1(A))\to \HH^1_{\fpqc}(R,\uPGL_1(A))$
is an isomorphism; see \cite[Th\'eor\`eme~11.7(1), Remarque~11.8(3)]{Groth_68_Brauer_III}.
It is therefore enough to prove the theorem with $\fpqc$ cohomology.
To this end, 
we first establish two independent facts.

\medskip

First, we show that the set $\HH^1_\fpqc(R,\uGL_1(A))$ is trivial.  
The flat cohomology set $\HH^1_\fpqc(R,\uGL_1(A))$ parametrizes the isomorphism classes of left $A$-modules $P$ which become isomorphic to $A$ itself after a faithfully flat   extension of $R$. As such an $A$-module $P$ is necessarily a finitely generated and projective $A$-module 
(see the first paragraph of the proof
of \cite[Proposition~5.1]{Bayer_17_rational_iso_herm}), one can apply 
\cite[Proposition 2.11]{Bayer_17_rational_iso_herm} to conclude that $P$ is isomorphic to $A$. Consequently, there is only one such isomorphism class of $A$-modules and the set $\HH^1_\fpqc(R,\uGL_1(A))$ is a singleton.

\medskip

Second, as $R$ is a regular domain, the morphism  $\HH^2_\fpqc(R,\uGm)\to \HH^2_\fpqc(F,\uGm)$ is injective by \cite[Corollaire 1.8]{Groth_68_Brauer_II}. 
%Indeed, in the terminology used in this paper, there is only one maximal point on $\Spec R$, namely the generic point, the local ring of which is the fraction field $F$ of $R$.

\medskip

Now,
consider  the exact sequence 
$$1\to \nGm{R} \to \uGL_1(A) \to \uPGL_1(A) \to 1$$
of sheaves  on the flat (fpqc) site of $\Spec R$. 
It induces the following diagram of pointed cohomology sets with exact rows.
\[
\xymatrix{
\HH^1_\fpqc(R,\uGL_1(A))  \ar[r]\ar[d] &
\HH^1_\fpqc(R,\uPGL_1(A)) \ar[r]\ar[d] &
\HH^2_\fpqc(R,\uGm) \ar[d] \\
\HH^1_\fpqc(F,\uGL_1(A)) \ar[r] &
\HH^1_\fpqc(F,\uPGL_1(A)) \ar[r] &
\HH^2_\fpqc(F,\uGm)
}
\]
A straightforward diagram chasing using the two facts proved above gives the result.
\end{proof}

\begin{remark}
	Theorem~\ref{TH:Aut-A-works}
	and Theorem~\ref{TH:PGL-A-works} require independent proofs
	because
	in general, the  morphism $\uPGL_1(A)\to \uAut_R(A)$
	sending a section $x$ to conjugation by $x$ is not an isomorphism,
	even when $A$ is a hereditary $R$-order in a central simple
	$F$-algebra. 
	For example, let $R$ be a DVR with maximal ideal $\frakm=\pi R$
	and let $A=\smallSMatII{R}{\frakm}{R}{R}$.
	Then $a\mapsto \smallSMatII{0}{\pi}{1}{0}a\smallSMatII{0}{\pi}{1}{0}^{-1}$
	is an automorphism of $A$ which is not inner.
\end{remark}

\section{Hereditary Orders with Involution}

\label{sec:counterexamples}

	Throughout, $R$ is a semilocal Dedekind domain with fraction field $F$.
	As in Section~\ref{sec:orders}, unadorned tensors are taken over $R$.
	
	We now show that for suitably chosen $R$, there exist non-isomorphic  hereditary $R$-orders with involution
	which become isomorphic (as algebras with involution) over $F$
	and over a faithfully flat \'etale $R$-algebra.
	We shall see that the  counterexample can be chosen so that 
	base change of the involution to $F$ is either
	orthogonal, symplectic or unitary. 
	However, this cannot happen for Azumaya orders with involution,
	at least when $R$ is local, by \cite{Panin_05_purity_for_mult}.

	\begin{example}\label{EX:main-counterex}
	Let $R=\R[\![t]\!]$.
	The complex conjugation on $\C$ and the canonical
	symplectic involution on the real quaternions $\bbH$
	are both denoted $\overline{\phantom{w}}$. 	
	Define the $R$-order with involution $(\calO,\tau)$ to be any of the following:
	\begin{enumerate}
		\item $\calO=R$ and $\tau=\id_R$,
		\item $\calO=R\otimes_{\R} \C=\C[\![t]\!]$ and $\tau=\id_R\otimes \quo{\phantom{w}}$,
		\item $\calO=R\otimes_{\R} \bbH=\bbH[\![t]\!]$ and $\tau=\id_R\otimes \quo{\phantom{w}}$.
	\end{enumerate}
	Theorem~\ref{TH:structure-hered-div} implies that $\calO$ is the unique
	maximal $R$-order in $D:=\calO\otimes_R \R(\!(t)\!)$,
	which is either $\R(\!(t)\!)$, $\C(\!(t)\!)$ or $\bbH(\!(t)\!)$.
	
	Let $\frakm=t\calO$ denote the maximal ideal of $\calO$,
	and, with the notation of \ref{subsec:henselian-case},
	define
	\[A=\calO_D^{(4,2)}=\SMatII{\calO}{\frakm}{\calO}{\calO}^{(4,2)}\ .\]
	By Theorem~\ref{TH:structure-of-hered-rings},  $A$ is a hereditary $R$-order in $\nMat{D}{6}$.
	
	The involution $\tau$ induces an involution $(\alpha_{ij})_{i,j}\mapsto
	(\tau(\alpha_{ji}))_{i,j}: \nMat{D}{6}\to \nMat{D}{6}$, which we also denote
	by $\tau$.
	Let
	\begin{align*}
	a_1&=\mathrm{diag}(1,-1,1,-1,t,\phantom{-}t),\\
	a_2&=\mathrm{diag}(1,-1,1,\phantom{-}1,t,-t),
	\end{align*}	
	and define involutions $\sigma_1,\sigma_2:A\to A$
	by 
	\[\sigma_i(x)=a_i^{-1}\tau(x)a_i\qquad(i=1,2).\]
	We claim that $(A,\sigma_1)$ and $(A,\sigma_2)$ become isomorphic
	after tensoring with $F$ and some faithfully flat \'etale $R$-algebra, 
	but are nevertheless non-isomorphic as $R$-algebras with involution.
	
	To see this, we note that if $S$ is an $R$-algebra and $u\in A\otimes S$	
	satisfies $\tau(u)a_2u=\alpha a_1$ for some $\alpha\in\units{S}$, then $x\mapsto uxu^{-1}$
	determines an isomorphism $(A\otimes S,\sigma_1\otimes\id_S)\to (A\otimes S,\sigma_2\otimes\id_S)$.
	For $S=F$, one can take
	\[
	u=
	\left[\begin{matrix}
		{\frac{1}{2}t+\frac{1}{2}} &
		{\frac{1}{2}t-\frac{1}{2}} \\[2pt]
		{\frac{1}{2}t-\frac{1}{2}} &
		{\frac{1}{2}t+\frac{1}{2}}
		\end{matrix}\right]
	\oplus
	\left[\begin{matrix}
		& & t & \\
		& & & t\\
		1 & \\
		& 1
		\end{matrix}\right]
	\]	
	with $\alpha=t$. For the faithfully
	flat \'etale $R$-algebra $S=R[\sqrt{-1}]$, one can take	
	\[
	u=
	\left[\begin{matrix}
		1 & \\
		& 1
		\end{matrix}\right]
	\oplus
	\left[\begin{matrix}
		1 & \\
		& \sqrt{-1}
		\end{matrix}\right]
	\oplus
	\left[\begin{matrix}
		1 & \\
		& \sqrt{-1}
		\end{matrix}\right]
	\]	
	with $\alpha=1$.
	
	We proceed by showing that
	there is no  isomorphism $(A,\sigma_1)\to (A,\sigma_2)$.
	Let $\quo{A}:=A/\Jac(A)\cong \nMat{k_D}{4}\times \nMat{k_D}{2}$,
	where $k_D=\calO/\frakm$ is either $\R$, $\C$ or $\bbH$.
	For $i=1,2$, the involution $\sigma_i$ induces an involution $\quo{\sigma}_i:\quo{A}\to \quo{A}$,
	and direct computation shows that
	$(\quo{A},\quo{\sigma}_i)=(\nMat{k_D}{4},\sigma'_i)\times (\nMat{k_D}{2},\sigma''_i)$
	for suitable $\R$-involutions $\sigma'_i$, $\sigma''_i$.
	In fact, $\sigma''_1$ and $\sigma''_2$
	are given by
	\[\sigma''_1[\begin{smallmatrix} x& y \\ z & w\end{smallmatrix}]=
	[\begin{smallmatrix} \quo{x}& \quo{z} \\ \quo{y} & \quo{w}\end{smallmatrix}]
	\qquad\text{and}\qquad
	\sigma''_2[\begin{smallmatrix} x& y \\ z & w\end{smallmatrix}]=
	[\begin{smallmatrix} 1& 0 \\ 0 & -1\end{smallmatrix}]^{-1}
	[\begin{smallmatrix} \quo{x}& \quo{z} \\ \quo{y} & \quo{w}\end{smallmatrix}]
	[\begin{smallmatrix} 1& 0 \\ 0 & -1\end{smallmatrix}]\ .
	\]
	
	An isomorphism $\psi:(A,\sigma_1)\to (A,\sigma_2)$
	would induce an isomorphism of $\R$-algebras
	with involution
	$\quo{\psi}:(\quo{A},\quo{\sigma}_1)\to (\quo{A},\quo{\sigma}_2)$.
	Considering the action on $\quo{\psi}$ on the central
	idempotents of $\quo{A}$, we see that
	$\quo{\psi}$ must further restrict
	to an isomorphism
	$(\nMat{k_D}{2},\sigma''_1)\to (\nMat{k_D}{2},\sigma''_2)$.
	However, $\sigma''_1$ is easily seen to be anisotropic (meaning that $\sigma''_1(x)x=0$
	implies $x=0$) while $\sigma''_2$ is isotropic, so $\psi$ cannot exist.

	Let $\uAut_R(A,\sigma_1)$ denote 
	the group $R$-scheme representing
	the functor 
	$S\mapsto  \Aut_S(A\otimes S,\sigma_1\otimes\id_S)$.
	From the previous discussion, we conclude that the restriction map
	\[
	\HH^1_\et(R,\uAut_R(A,\sigma_1))\to \HH^1_\et(F,\uAut_R(A,\sigma_1))
	\]
	is not injective. Indeed, the left hand side
	classifies $R$-orders with involution which become isomorphic
	to $(A,\sigma_1)$ over some faithfully flat \'etale $R$-algebra,
	and
	$(A,\sigma_2)$ represents a nontrivial class which is mapped
	to the trivial element
	of $\HH^1_{\et}(F,\uAut_R(A,\sigma_1))$.
	
	We finally note that in cases (1), (2) and (3) above, 
	$\sigma_1\otimes\id_F:A\otimes F\to A\otimes F$
	is  orthogonal, unitary and symplectic, respectively. 
	\end{example}
	
	Call an $R$-order with involution $(A,\sigma)$ \emph{residually anisotropic}
	if the induced involution $\quo{\sigma}:A/\Jac(A)\to A/\Jac(A)$
	is anisotropic, i.e.\ $\quo{\sigma}(x)x=0$ implies $x=0$ for all $x\in A/\Jac(A)$.
	The reader will notice that in Example~\ref{EX:main-counterex},
	both   $(A,\sigma_1)$ and $(A,\sigma_2)$
	are not residually anisotropic, and the isotropicity plays a crucial role.
	We therefore ask:
	
	\begin{que}\label{QE:res-aniso}
		Let $R$ be a semilocal Dedekind domain and let $(A,\sigma)$,
		$(A',\sigma')$ be two hereditary $R$-orders with involution which
		become isomorphic over the fraction field of $R$ and over some faithfuly
		flat \'etale $R$-algebra. Suppose $(A,\sigma)$ is residually anisotropic.
		Is $(A,\sigma)$ isomorphic to $(A',\sigma')$ as $R$-algebras with involution?
	\end{que}
	
	The question is also motivated by the work  of Bruhat and Tits on the cohomology
	of reductive groups over henselian discretely valued fields \cite{Bruhat_87_reductive_groups_III}.
	Specifically, it seems   likely that a positive answer
	should follow from \cite[Lemma~3.9]{Bruhat_87_reductive_groups_III} in
	case $R$ is a complete DVR; we elaborate about this in the next section.
	That said, we hope that a more
	direct proof can be found.

\section{Discussion}

\label{sec:remarks}

	We finish with explaining how the results of the previous sections
	relate to a question asked by the first two authors, \cite[Question~6.4]{Bayer_17_rational_iso_herm}.
	
\medskip

	Let $R$ be a semilocal Dedekind domain with fraction field $F$
	and suppose that $R/\frakp$ is perfect for all $\frakp\in \Max R$.
	We let $\hat{R}_\frakp$ and   $\hat{F}_\frakp$ denote the completion of $R_\frakp$
	and its corresponding fraction field.
	We shall use the notation of Section~\ref{sec:orders} for henselizations and strict henselizations.

	Let $\calG$ be a group scheme over $R$
	and let $\bfG=\calG\times_R F$ denote its generic fiber, which we assume to be
	reductive and connected.
	In \cite[\S6]{Bayer_17_rational_iso_herm},
	the group scheme $\calG$
	was called a \emph{point-stablizer} (resp.\ \emph{parahoric})
	group scheme for $\bfG$
	if for every $\frakp\in \Max R$,
	the group $\hat{R}_\frakp$-scheme $\calG \times_{R} \hat{R}_\frakp$
	coincides with one of  the group
	schemes that Bruhat and Tits \cite{Bruhat_84_reductive_groups_II}
	associate with  stablizers of points of the affine
	building of  $\bfG\times_F \hat{F}_\frakp$  (resp.\ parahoric subgroups
	of  $\bfG(\hat{F}_\frakp)$).
	Briefly, letting $\calB(\bfG,\hat{F}_\frakp)$ denote the (extended) affine Bruhat--Tits
	building of $\bfG\times_F \hat{F}_\frakp$,
	the point stablizer group scheme associated with a point $y\in \calB(\bfG,\hat{F}_\frakp)$
	is the smooth affine group $\hat{R}_\frakp$-scheme $\calG_y$
	with generic fiber $\bfG\times_F\hat{F}_\frakp$
	characterized by the condition that $\calG_y(\hat{R}^{\mathrm{sh}}_\frakp)$
	is the stablizier of $y\in \calB(\bfG,\hat{F}^{\mathrm{sh}}_\frakp)$
	under the action of $\bfG(\hat{R}^{\mathrm{sh}}_\frakp)$.
	The neutral component of $\calG_y\to \Spec \hat{R}_\frakp$ is then called the parahoric group
	scheme 	associated with $y$.
	
\medskip
	
	Let $A$ be a hereditary $R$-order in a simple $F$-algebra 
	and let $\sigma:A\to A$ be an $R$-involution.
	We write $A_F=A\otimes F$, $\sigma_F =\sigma\otimes\id_F$
	and let $\uU(A,\sigma)$ denote the affine group $R$-scheme determined
	by $\uU(A,\sigma)(S)=U(A_S,\sigma_S):=\{a\in A_S\where a^\sigma a=1\}$
	for any $R$-algebra $S$.
	In order to guarantee that
	$\bfG:=\uU(A_F,\sigma_F)\to \Spec F$ is connected,
	we assume further that either $\Cent(A_F)=F$
	and $\sigma_F$ is symplectic, or
	$\Cent(A_F)$ is a quadratic \'etale $F$-algebra
	and $\sigma_F$ is unitary.
	The excluded case where $\Cent(A_F)=F$
	and $\sigma_F$ is orthogonal can be handled similarly 
	after few modifications.

	In \cite[\S5--6]{Bayer_17_rational_iso_herm}, the first two authors
	showed that
	\[
	\HH^1_\et(R,\uU(A,\sigma))\to \HH^1_\et(F,\uU(A,\sigma))
	\]
	is injective,
	and moreover, that the point stablizer group schemes of   $\bfG$
	are all of the form $\uU(A,\sigma)$ 
	as $A$ varies over the hereditary orders in $A_F$ stable under $\sigma_F$.
	This has led the first two authors to ask whether
	$\HH^1_\et(R,\calG)\to \HH^1_\et(F,\calG)$ is injective
	for any point stablizer (resp.\ parahoric) group scheme $\calG$ of a connected
	reductive group scheme over $F$, \cite[Question 6.4]{Bayer_17_rational_iso_herm}.
	We explain briefly how the results and counterexamples of the previous sections
	relate to this question.

	The generic fibers of   $\uAut_R(A,\sigma)$ and $\uAut_R(A)$
	are the    reductive  groups $\uAut_F(A_F,\sigma_F)$
	and $\uAut_F(A_F)$ (which is just $\uPGL_1(A_F)$ if $\Cent(A)=F$),
	the buildings of which are well-understood; see \cite{Abram_02_lattice_models_building} or
	\cite{Bruhat_84_building_GLD}, \cite{Bruhat_87_buidling_SU}, for instance.
	\emph{Provided that  $\uAut_R(A,\sigma)$ and $\uAut_R(A)$
	are smooth over $R$}, one can use these sources to show that 
	these are indeed point-stablizer group
	schemes of their corresponding generic fibers. Then,
	the results of this paper   give a mixed answer to 
	the question above:
	By Theorem~\ref{TH:Aut-A-works}, the answer is positive for some of the point-stablizier group schemes 
	of $\uAut_F(A_F)=\uPGL_1(A_F)$ when $\Cent(A)=F$, whereas 
	by Example~\ref{EX:main-counterex}, the answer can be negative for some
	point-stabilizer group schemes of $\uAut_F(A_F,\sigma_F)$.
	If moreover $\uPGL_1(A)\to \Spec R$
	is the 	neutral component of $\uAut_R(A)\to \Spec R$ when $\Cent(A)=F$, then Theorem~\ref{TH:PGL-A-works}
	provides a partial positive answer for the question in the case of   parahoric group
	schemes of $\uPGL_1(A_F)$.
	
	The examples in Subsection~\ref{subsec:counterex} do not
	relate to \cite[Question 6.4]{Bayer_17_rational_iso_herm} because,
	in these examples, the generic fiber of $\uAut_R(A)$ is reductive but not connected.
	
	We remark that the smoothness of $\uAut_R(A)$ and $\uAut_R(A,\sigma)$ over $R$
	as well as the condition that $\uPGL_1(A)\to\Spec R$ is the neutral component
	of $\uAut_R(A)\to \Spec R$
	are   false for arbitrary $R$-orders in central simple algebras.
	We hope to address both of these problems in the case of hereditary
	orders in   subsequent work.
	
\medskip

	We finally note that in the case where $R$ is a complete DVR
	with residue field $k$,
	\cite[Question~6.4]{Bayer_17_rational_iso_herm} reduces to asking whether
	a result of Bruhat and Tits, \cite[Lemma~3.9]{Bruhat_87_reductive_groups_III},
	can be strengthened.
	Indeed, the latter result
	asserts the injectivity of $\HH^1_\et(R,\calG_y)\to \HH^1_\et(F,\calG_y)$
	for any  point stablizer group scheme $\calG_y$ 
	in which  $y$ is the barycenter of a cell in the affine building
	of $\bfG:=\calG_y\times_R F$ and such that the closed fiber
	$\calG_y\times_R k$ 
	is almost anisotropic in the sense that it has no proper parabolic subgroups.
	
	Since in Example~\ref{EX:main-counterex}, the base ring is a complete DVR,
	it is possible that the reason for the apparent negative
	answer to \cite[Question~6.4]{Bayer_17_rational_iso_herm}
	is  that arbitrary point-stablizer group schemes are allowed.
	Returning to the case where $R$ is any semilocal Dedekind domain with perfect
	residue fields, one can   ask instead whether
	$\HH^1_\et(R,\calG )\to \HH^1_\et(F,\calG )$
	is injective when $\calG\to \Spec R$ is \emph{barycentric} point-stablizier group
	scheme  whose closed fibers are almost anisotropic.
	This statement already follows from \cite[Lemma~3.9]{Bruhat_87_reductive_groups_III} when
	$R$ is a complete DVR.

\bibliographystyle{plain}
\bibliography{orders}

\begin{thebibliography}{10}

\bibitem{Abram_02_lattice_models_building}
Peter Abramenko and Gabriele Nebe.
\newblock Lattice chain models for affine buildings of classical type.
\newblock {\em Math. Ann.}, 322(3):537--562, 2002.

\bibitem{Auslan_60_maximal_orders}
Maurice Auslander and Oscar Goldman.
\newblock Maximal orders.
\newblock {\em Trans. Amer. Math. Soc.}, 97:1--24, 1960.

\bibitem{Bayer_17_weak_approx}
Eva Bayer-Fluckiger and Uriya~A. First.
\newblock Patching and weak approximation in isometry groups.
\newblock {\em Trans. Amer. Math. Soc.}, 369(11):7999--8035, 2017.

\bibitem{Bayer_17_rational_iso_herm}
Eva Bayer-Fluckiger and Uriya~A. First.
\newblock Rationally isomorphic hermitian forms and torsors of some
  non-reductive groups.
\newblock {\em Adv. Math.}, 312:150--184, 2017.

\bibitem{Bruhat_84_reductive_groups_II}
Fran\c{c}ois Bruhat and Jacques Tits.
\newblock Groupes r\'eductifs sur un corps local. {II}. {S}ch\'emas en groupes.
  {E}xistence d'une donn\'ee radicielle valu\'ee.
\newblock {\em Inst. Hautes \'Etudes Sci. Publ. Math.}, (60):197--376, 1984.

\bibitem{Bruhat_84_building_GLD}
Fran\c{c}ois Bruhat and Jacques Tits.
\newblock Sch\'emas en groupes et immeubles des groupes classiques sur un corps
  local.
\newblock {\em Bull. Soc. Math. France}, 112(2):259--301, 1984.

\bibitem{Bruhat_87_reductive_groups_III}
Fran\c{c}ois Bruhat and Jacques Tits.
\newblock Groupes alg\'ebriques sur un corps local. {C}hapitre {III}.
  {C}ompl\'ements et applications \`a la cohomologie galoisienne.
\newblock {\em J. Fac. Sci. Univ. Tokyo Sect. IA Math.}, 34(3):671--698, 1987.

\bibitem{Bruhat_87_buidling_SU}
Fran\c{c}ois Bruhat and Jacques Tits.
\newblock Sch\'emas en groupes et immeubles des groupes classiques sur un corps
  local. {II}. {G}roupes unitaires.
\newblock {\em Bull. Soc. Math. France}, 115(2):141--195, 1987.

\bibitem{Colliot_87_flasque_tori_app}
Jean-Louis Colliot-Th\'el\`ene and Jean-Jacques Sansuc.
\newblock Principal homogeneous spaces under flasque tori: applications.
\newblock {\em J. Algebra}, 106(1):148--205, 1987.

\bibitem{SGA3}
Michel Demazure and Alexander Grothendieck.
\newblock {\em Sch\'emas en groupes. {III}: {S}tructure des sch\'emas en
  groupes r\'eductifs}.
\newblock S\'eminaire de G\'eom\'etrie Alg\'ebrique du Bois Marie 1962/64 (SGA
  3). Dirig\'e par M. Demazure et A. Grothendieck. Lecture Notes in
  Mathematics, Vol. 153. Springer-Verlag, Berlin-New York, 1970.

\bibitem{Fedor_15_Groth_Serre_conj}
Roman Fedorov and Ivan Panin.
\newblock A proof of the {G}rothendieck-{S}erre conjecture on principal bundles
  over regular local rings containing infinite fields.
\newblock {\em Publ. Math. Inst. Hautes \'Etudes Sci.}, 122:169--193, 2015.

\bibitem{Groth_68_Brauer_II}
Alexander Grothendieck.
\newblock Le groupe de {B}rauer. {II}. {T}h\'eorie cohomologique.
\newblock In {\em Dix expos\'es sur la cohomologie des sch\'emas}, volume~3 of
  {\em Adv. Stud. Pure Math.}, pages 67--87. North-Holland, Amsterdam, 1968.

\bibitem{Groth_68_Brauer_III}
Alexander Grothendieck.
\newblock Le groupe de brauer {III}: Exemples et compl{\'e}ments.
\newblock In {\em Dix Expos\'{e}s sur la Cohomologie des Sch\'{e}mas}, page
  88{\textendash}198. {North-Holland}, Amsterdam, 1968.

\bibitem{Groth_58_torsion_homology}
Alexandre Grothendieck.
\newblock Torsion homologique et sections rationnelles.
\newblock {\em S\'{e}minaire Claude Chevalley}, 3:1--29, 1958.

\bibitem{Lam_99_modules_and_rings}
Tsit~Yuen Lam.
\newblock {\em Lectures on modules and rings}, volume 189 of {\em Graduate
  Texts in Mathematics}.
\newblock Springer-Verlag, New York, 1999.

\bibitem{Nisne_84_principle_bundles}
Yevsey~A. Nisnevich.
\newblock Espaces homog\`enes principaux rationnellement triviaux et
  arithm\'etique des sch\'emas en groupes r\'eductifs sur les anneaux de
  {D}edekind.
\newblock {\em C. R. Acad. Sci. Paris S\'er. I Math.}, 299(1):5--8, 1984.

\bibitem{Panin_17_GS_conj_III}
I.~A. Panin.
\newblock Proof of grothendieck--serre conjecture on principal bundles over
  regular local rings containing a finite field.
\newblock 2017.
\newblock Preprint (currently available at https://arxiv.org/abs/1707.01767).

\bibitem{Panin_05_purity_for_mult}
Ivan Panin.
\newblock Purity for multipliers.
\newblock In {\em Algebra and number theory}, pages 66--89. Hindustan Book
  Agency, Delhi, 2005.

\bibitem{Reiner_03_maximal_orders}
Irving Reiner.
\newblock {\em Maximal orders}, volume~28 of {\em London Mathematical Society
  Monographs. New Series}.
\newblock The Clarendon Press Oxford University Press, Oxford, 2003.
\newblock Corrected reprint of the 1975 original, With a foreword by M. J.
  Taylor.

\bibitem{Serre_58_fibered_spaces}
J.-P. Serre.
\newblock Espaces fibr\'{e}s alg\'{e}briques.
\newblock {\em S\'{e}minaire Claude Chevalley}, 3:1--37, 1958.

\end{thebibliography}

\end{document}